\documentclass{amsart}
\usepackage{amsmath}
\usepackage{amssymb}
\usepackage{amsthm}
\usepackage{enumerate}
\usepackage{xcolor}

\newtheorem{theorem}{Theorem}
\newtheorem{prop}[theorem]{Proposition}
\newtheorem{lemma}[theorem]{Lemma}
\newtheorem{rem}[theorem]{Remark}

\newtheorem{cor}[theorem]{Corollary}

\begin{document}

\title{Four relations on the set of point-hyperplane anti-flags}
\author{Mark Pankov, Antonio Pasini}
\keywords{anti-flag, polar space}
\subjclass[2020]{51A10, 51A50}
\address{Mark Pankov: Faculty of Mathematics and Computer Science, 
University of Warmia and Mazury, S{\l}oneczna 54, 10-710 Olsztyn, Poland}
\email{pankov@matman.uwm.edu.pl}
\address{Antonio Pasini: Department of Information Engineering and Mathematics, University of Siena, Via Roma 56, I-53100 Siena, Italy}
\email{antonio.pasini@unisi.it}

\begin{abstract}
There are precisely four arrangements  of two point-hyperplane anti-flags.
We consider the corresponding relations on the set of such anti-flags and show that 
each of them can be recovered from any other except in one special case. 
If the field consists of two elements, then one of the relations cannot be used to recover each of the remaining three. 
This is related to a bijection between anti-flags and exterior points of the hyperbolic polar space which exists in this case.
\end{abstract}

\maketitle
\section{Introduction}
A point-hyperplane anti-flag of ${\rm PG}(n-1,{\mathbb F})$ is a pair of non-incident point and hyperplane.
There are precisely four arrangements of such two anti-flags. We consider the corresponding relations $\sim_i$, $i\in \{1,2,3,4\}$ on the set of anti-flags
for an arbitrary (not necessarily finite) field  ${\mathbb F}$.

If the characteristic of ${\mathbb F}$ is distinct from two, then every anti-flag can be identified with an involution
$u\in {\rm GL}(n, {\mathbb F})$ such that one of the subspaces ${\rm Ker}(id-u)$, ${\rm Im}(id- u)$ is the point and 
the other is the hyperplane of the anti-flag
(note that $u$ is defined up to the sign $\pm$). 
Two of our relations ($\sim_2$ and $\sim_3$) correspond to the cases when the involutions related to anti-flags  eiher commute or their composition is a transvection. 
These relations  were used in the description of automorphisms of the group ${\rm GL}(n, {\mathbb F})$, see \cite{Die,Pank-book}.

We show that every $\sim_j$ can be recovered from any other $\sim_i$ except in the case when 
$i=1$ and ${\mathbb F}$ is the field of two elements (Theorem \ref{th1}).
As a consequence, we obtain that the automorphism group of the graph $\Gamma_i$ defined by $\sim_i$
is the semidirect product ${\rm P\Gamma  L}(n, {\mathbb F})\rtimes C_2$ except in the case mentioned above.

Describe briefly the exceptional case.
By \cite{IP}, there is a bijection between anti-flags of ${\rm PG}(n-1,2)$
and non-singular points of the hyperbolic form on ${\mathbb F}^{2n}_2$ or, equivalently, exterior points of the polar space ${\mathcal O}^{+}(2n,2)$
(we describe this bijection in Section 4).
Two anti-flags of ${\rm PG}(n-1,2)$ are related by $\sim_1$ if and only if the corresponding non-singular points are joined by a totally non-singular line. 
Therefore, the graph $\Gamma_1$  is the complement of the well-known graph $NO^{+}(2n,2)$ \cite[Subsection 3.1.2]{BvM}.
We show that the polar space ${\mathcal O}^{+}(2n,2)$ can be recovered from $\Gamma_1$ (Theorem \ref{th2});
in particular,  the automorphism group of the graph $\Gamma_1$ is the orthogonal group ${\rm O}^{+}(2n,2)$. 
So, the automorphism groups of the graphs $\Gamma_1$ and $\Gamma_i$, $i\in \{2,3,4\}$ are distinct which implies that $\sim_i$ cannot be recovered from $\sim_1$.

\section{Main objects and results}
Let ${\mathbb F}$ be a field (not necessarily finite) and let $n$ be a natural number not less than three.
Consider the $n$-dimensional vector space $V={\mathbb F}^n$  and the associated projective space ${\rm PG}(n-1,{\mathbb F})$.

A {\it point-hyperplane anti-flag} of ${\rm PG}(n-1,{\mathbb F})$ is a pair $(p,H)$, where $p$ is a point and $H$ is a hyperplane 
such that $p\not\in H$. The set of all such anti-flags will be denoted by ${\mathcal A}$.
We consider the four relations $\sim_i$, $i\in \{1,2,3,4\}$ on ${\mathcal A}$ defined as follows.
For distinct anti-flags $A_j=(p_j,H_j)$, $j\in \{1,2\}$ we write
\begin{itemize}
\item[]$A_1\sim_1 A_2$ if and only if $p_j\in H_{3-j}$ and $p_{3-j}\not\in H_j$ for certain $j\in \{1,2\}$,
\item[]$A_1\sim_2 A_2$ if and only if $p_j\in H_{3-j}$ for each $j\in \{1,2\}$,
\item[]$A_1\sim_3 A_2$ if and only if either $p_1 = p_2$ or $H_1 = H_2$,
\item[]$A_1\sim_4 A_2$ if and only if $p_1\neq p_2$, $H_1\neq H_2$ and $p_j \not\in H_{3-j}$ for each $j\in \{1,2\}$.
\end{itemize} 
It is clear that every pair of anti-flags satisfies just one of the above four conditions.

\begin{rem}\label{rem-inv}{\rm
Suppose that the characteristic of ${\mathbb F}$ is distinct from two. 
For every involution $u\in {\rm GL}(n, {\mathbb F})$ there are subspaces $S_{+}(u),S_{-}(u)\subset V$
such that $V$ is the direct sum of these subspaces and the restriction of $u$ to $S_{+}(u)$ and $S_{-}(u)$ is $id$ and $-id$, respectively.
Then $u$ is said to be  a $(k,n-k)$-{\it involution} if 
the dimension of $S_{+}(u)$ and $S_{-}(u)$ is $k$ and $n-k$, respectively.
For every $k\in \{1,\dots,n-1\}$ all $(k,n-k)$-involutions form a conjugacy class in ${\rm GL}(n, {\mathbb F})$.
Point-hyperplane anti-flags of ${\rm PG}(n-1,{\mathbb F})$ can be identified with $(1,n-1)$-involutions as well as with $(n-1,1)$-involutions.
For $k=1,n-1$ two distinct $(k,n-k)$-involutions commute if and only if the corresponding anti-flags are related by $\sim_2$.
We say that $t\in {\rm GL}(n, {\mathbb F})$ is a {\it transvection} if 
$$t(x)=x+\alpha(x)x_0,$$
where $\alpha:V\to {\mathbb F}$ is a linear functional and $x_0$ is a non-zero vector in the kernel of $\alpha$. 
For $k=1,n-1$ the composition of two distinct $(k,n-k)$-involutions is a transvection if and only if the corresponding anti-flags are related by $\sim_3$.
The relations $\sim_2$ and $\sim_3$ applied to $(1,n-1)$-involutions were used in the description of automorphisms of the group ${\rm GL}(n, {\mathbb F})$.
See \cite{Die} and \cite[Section 3.7]{Pank-book} for more information.
}\end{rem}

\begin{theorem}\label{th1}
The following assertions are fulfilled:
\begin{itemize}
\item[(1)]For any distinct $i,j\in \{1,2,3,4\}$ the relation $\sim_j$ can be recovered from the relation $\sim_i$ 
if $|{\mathbb F}|\ge 3$ or  $i\neq 1$.
\item[(2)] If $|{\mathbb F}|=2$, then each $\sim_j$, $j\in \{2,3,4\}$ cannot be recovered from $\sim_1$.
\end{itemize}
\end{theorem}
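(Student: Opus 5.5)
The plan is to reduce everything to the relation $\sim_3$, and to handle part (2) by an automorphism argument. For distinct anti-flags exactly one of $\sim_1,\dots,\sim_4$ holds. So it suffices to show two things: that $\sim_3$ can be recovered from each $\sim_i$ (with the stated exception), and that the whole incidence structure ${\rm PG}(n-1,{\mathbb F})$ can be recovered from $\sim_3$. Once $\sim_3$ is known, consider its maximal cliques. I expect them to be exactly the \emph{stars}
$$[p\rangle=\{(p,H): p\notin H\},\qquad \langle H]=\{(p,H): p\notin H\}.$$
To check this, take three anti-flags pairwise related by $\sim_3$. They cannot share the point pairwise for two pairs and the hyperplane for the third without all three collapsing, so every clique lies in one star. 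This identifies points and hyperplanes with the two families of maximal cliques, although we cannot yet tell which family is which; this ambiguity is the $C_2$ (duality). Incidence is then recovered by $p\in H \iff [p\rangle\cap\langle H]=\emptyset$. From points, hyperplanes and incidence all four relations are defined, so every $\sim_j$ is recovered from $\sim_3$.

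To recover $\sim_3$ from $\sim_i$ for $i\in\{1,2,4\}$, I would study the graph $\Gamma_i$ and characterize the $\sim_3$-pairs by a property of common neighbourhoods. For $A=(p,H)$ and $B$, I would compute $\Gamma_i(A)\cap\Gamma_i(B)$ in each of the four cases $A\sim_k B$. The aim is an intrinsic condition, such as inclusion $\Gamma_i(A)\cap\Gamma_i(C)\subseteq \Gamma_i(B)$, or a maximal-clique or "perp"-type closure, that holds exactly for $k=3$.

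For $\sim_2$, the natural tool is maximal cliques. These are the frames $\{(p_1,H_1),\dots,(p_n,H_n)\}$, where the $p_j$ form a basis and $H_j=\langle p_l: l\neq j\rangle$. I expect $A\sim_3 B$ with $p_A=p_B$ to be characterized by $\Gamma_2(A)$ and $\Gamma_2(B)$ both containing $\Gamma_2(A)\cap\Gamma_2(B)$ as a large common part of codimension-one type. Concretely, the neighbours $(q,G)$ of $A$ satisfy $q\in H_A$ and $p\in G$, and for $B$ only the condition $q\in H_B$ changes. I would make this precise by comparing which frames extend $\{A\}$ and $\{A,B\}$.

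The case $\sim_4$ is handled similarly; there, "generic" position makes intersections of neighbourhoods large. The relation $\sim_1$ is the delicate case. Here $\Gamma_1(A)$ is a union of two pieces, $\{(q,G): p\in G,\ q\notin H\}$ and its dual. Separating a $\sim_3$-pair from, say, a $\sim_2$-pair should need a third point on a line off the relevant hyperplanes, that is, $|{\mathbb F}|\ge 3$. I expect this $i=1$ computation to be the main obstacle: one must find a neighbourhood invariant distinguishing the cases that genuinely uses a third scalar, so that over ${\rm GF}(2)$ it fails.

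For part (2), I would use the bijection of \cite{IP} (Section 4) between anti-flags of ${\rm PG}(n-1,2)$ and non-singular points of the hyperbolic quadric in ${\mathbb F}_2^{2n}$. Under it, $\sim_1$ becomes adjacency along totally non-singular lines, so ${\rm O}^{+}(2n,2)$ acts on $\Gamma_1$ by automorphisms. Suppose some $\sim_j$ with $j\neq 1$ were recoverable from $\sim_1$. Then ${\rm Aut}(\Gamma_1)\subseteq{\rm Aut}(\Gamma_j)$. By the first part together with the clique argument, ${\rm Aut}(\Gamma_j)$ is ${\rm P\Gamma L}(n,2)\rtimes C_2$, which has order $2|{\rm GL}(n,2)|$. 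This is strictly smaller than $|{\rm O}^{+}(2n,2)|$, since $n\ge3$. Since ${\rm O}^{+}(2n,2)$ acts faithfully on the non-singular points, we get a contradiction. Hence no $\sim_j$, $j\in\{2,3,4\}$, is recoverable from $\sim_1$.
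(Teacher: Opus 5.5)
\textbf{Verdict: genuine gap.} The core of part (1), recovering $\sim_3$ from $\sim_2$, from $\sim_4$ and from $\sim_1$, is announced but never proved.

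Your reduction to $\sim_3$ is fine, and so is the clique description of $\Gamma_3$. The paper gets the same reduction more cheaply from Proposition~\ref{prop-sim3}: for distinct, non-$3$-adjacent $A_1,A_2$ one has $|\{A_1,A_2\}^{\sim_3}|=1,0,2$ according as $A_1\sim_1A_2$, $A_1\sim_2A_2$, $A_1\sim_4A_2$. This also spares you from sorting the maximal cliques into two families.

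Your argument for (2) is sound in its logic, and lighter than the paper's. It only needs ${\rm O}^{+}(2n,2)$ to act faithfully on $\Gamma_1$ (via Proposition~\ref{prop-F2-2}) and to be larger than $2|{\rm GL}(n,2)|$. It does not need the full equality ${\rm Aut}(\Gamma_1)={\rm O}^{+}(2n,2)$ coming from Theorem~\ref{th2}.

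For each of the three hard directions you only describe the kind of criterion you hope exists: ``a large common part of codimension-one type'', ``generic position makes intersections large'', ``a neighbourhood invariant that uses a third scalar''. None of these is stated as a condition in graph terms and then checked in the four cases. This also leaves (2) incomplete for $j=2,4$. There you use ${\rm Aut}(\Gamma_j)={\rm P\Gamma L}(n,2)\rtimes C_2$, which depends on recovering $\sim_3$ from $\sim_2$ and from $\sim_4$ over ${\rm GF}(2)$.

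The heuristics also do not work as stated.

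\emph{From $\sim_4$.} Common neighbourhoods need not be large. If $|{\mathbb F}|=2$ and $n=3$, then $\{A_1,A_2\}^{\sim_4}$ is empty unless $A_1\sim_2A_2$ (Proposition~\ref{prop-sim4F2}), so it cannot separate $3$-adjacent from $1$-adjacent pairs. The paper instead orders the sets $\{A_1,A_2\}^{\sim_4}$ with $A_1\not\sim_4A_2$ by inclusion. It shows that $3$-adjacent pairs give exactly the maximal, non-minimal elements. This rests on $(\{A_1,A_2\}^{\sim_4})^{\sim_4}=\{A_1,A_2\}$, which needs $|{\mathbb F}|\ge 3$ or $n\ge 4$. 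The Fano plane is handled separately by first recovering $\sim_2$.

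\emph{From $\sim_2$.} You need a closure criterion such as Mackey's (Proposition~\ref{prop-sim2}): $A_1\sim_3A_2$ if and only if $(\{A_1,A_2\}^{\sim_2})^{\sim_2}$ equals the double perp of any two of its distinct members.

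\emph{From $\sim_1$.} Counting common neighbours is useless over infinite fields, where these sets are infinite. The paper characterizes $3$-adjacent pairs as those lying in a $4$-element coclique of $\Gamma_1$ with property (L): every outside anti-flag is $1$-adjacent to $0$, $1$, $3$ or $4$ of its members. Property (L) forces the coclique to be linear or dually linear, and such $4$-element cocliques exist only when $|{\mathbb F}|\ge 4$. The remaining case $|{\mathbb F}|=3$ is settled by the exact values of $|\{A_1,A_2\}^{\sim_1}|$ in Proposition~\ref{prop-finite1}, which are pairwise distinct precisely when $q\ge 3$. Your remark that ``a third scalar'' is needed does not replace either argument.
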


For every $i\in \{1,2,3,4\}$ we denote by
$\Gamma_i$ the simple graph whose vertex set is ${\mathcal A}$ and distinct anti-flags $A_1,A_2$
are adjacent vertices of this graph if and only if $A_1\sim_i A_2$. 
In what follows, we will say that anti-flags $A_1,A_2$ are $i$-{\it adjacent} if $A_1\sim_i A_2$. 
As previously remarked, $\Gamma_i\cap\Gamma_j$ is the totally disconnected graph on $\mathcal A$ for every choice of distinct veritices $i, j\in \{1,2,3,4\}$ while 
$\Gamma_1\cup\Gamma_2\cup\Gamma_3\cup\Gamma_4$ is the complete graph on $\mathcal A$.
 
Every semilinear automorphism of $V$ induces a bijective transformation of ${\mathcal A}$
preserving each $\sim_i$ in both directions, i.e. it is an automorphism of the graph $\Gamma_i$.
Similarly, if $u$ is a semilinear isomorphism of $V$ to the dual vector space $V^*$, then the map
$$(p,H)\to (u(H)^0,u(p)^0)$$
(we write $X^{0}$ for the annihilator of a subspace $X$) is an automorphism of $\Gamma_i$.

The first part of Theorem \ref{th1} gives the following.

\begin{cor}\label{cor1}
If $|{\mathbb F}|\ge 3$ or $i\neq 1$, then every automorphism of the graph $\Gamma_i$ is induced by 
a semilinear automorphism of $V$ or a semilinear isomorphism of $V$ to $V^*$.
\end{cor}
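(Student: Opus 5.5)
By Theorem \ref{th1}(1), for $|{\mathbb F}|\ge 3$ or $i\neq 1$ every relation $\sim_j$ can be recovered from $\sim_i$. So an automorphism $f$ of $\Gamma_i$ is automatically an automorphism of every $\Gamma_j$, and in particular of $\Gamma_3$. The plan is therefore to describe the automorphisms of $\Gamma_3$ alone and show that they are exactly the maps listed in the statement.

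\emph{Step 1: maximal cliques of $\Gamma_3$.} For a point $p$ let $X(p)$ be the set of all anti-flags $(p,H)$, and for a hyperplane $H$ let $Y(H)$ be the set of all anti-flags $(q,H)$. Since $n\ge 3$, each of these sets is infinite or has at least two elements, and each is a clique of $\Gamma_3$. Now take three pairwise $3$-adjacent anti-flags. They cannot satisfy $A_1,A_2$ sharing a point, $A_2,A_3$ sharing a hyperplane, and $A_1,A_3$ sharing something: sharing a point with $A_1$ would force $A_2=A_3$, and sharing a hyperplane would force $A_1=A_2$. Hence every clique lies in some $X(p)$ or some $Y(H)$, and the maximal cliques are exactly the sets $X(p)$ and $Y(H)$. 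Moreover, two distinct cliques $X(p)$ and $Y(H)$ meet in at most one vertex, while two distinct $X$'s (or two distinct $Y$'s) are disjoint.

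\emph{Step 2: preserving or swapping the two types.} $f$ permutes the maximal cliques and preserves the relation ``the two cliques intersect''. Define a graph on the maximal cliques in which two cliques are adjacent when they meet. This graph is bipartite with parts $\{X(p)\}$ and $\{Y(H)\}$, and it is connected: $X(p)$ and $X(p')$ are joined through $Y(H)$ for any $H$ missing both $p$ and $p'$. Therefore $f$ either preserves both parts or interchanges them. In the second case we compose $f$ with the duality map $(p,H)\mapsto(u(H)^0,u(p)^0)$ and reduce to the first case.

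\emph{Step 3: recovering a collineation.} Suppose now that $f$ maps each $X(p)$ to some $X(\sigma(p))$ and each $Y(H)$ to some $Y(\tau(H))$. Since $(p,H)=X(p)\cap Y(H)$, we get $f(p,H)=(\sigma(p),\tau(H))$. Here $\sigma$ is a bijection on points and $\tau$ a bijection on hyperplanes. The condition $p\notin H$ holds exactly when $X(p)\cap Y(H)\neq\emptyset$, so $p\notin H$ if and only if $\sigma(p)\notin\tau(H)$. Thus $(\sigma,\tau)$ preserves incidence in both directions, i.e. it is an automorphism of the point-hyperplane geometry. A hyperplane is determined by the set of points it contains, so $\sigma$ maps hyperplanes (as point sets) onto hyperplanes. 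Lines are intersections of hyperplanes, so $\sigma$ also preserves lines, and $\sigma$ is a collineation of ${\rm PG}(n-1,{\mathbb F})$. Since $n\ge 3$, the Fundamental Theorem of Projective Geometry shows that $\sigma$ is induced by a semilinear automorphism of $V$, and then $\tau$ is induced by the same map. Undoing the possible duality composition gives the semilinear isomorphism $V\to V^*$ in the second case.

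The main obstacle is Step 1, the classification of maximal cliques: one has to make sure that no third kind of clique exists, such as triangles mixing shared points and shared hyperplanes. The short argument given there handles this, and everything after it is standard.
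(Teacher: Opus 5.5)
Your proposal is correct and follows the paper's own argument. You reduce to $\Gamma_3$ via Theorem~\ref{th1}(1), identify the maximal cliques of $\Gamma_3$ as the sets of anti-flags with a common point or a common hyperplane, and observe that intersecting maximal cliques have different types while any two of the same type are linked by one of the opposite type, so types are either all preserved or all swapped; the Fundamental Theorem of Projective Geometry then finishes. You simply supply more detail than the paper's sketch does, namely the triangle argument for the clique classification and the passage from the incidence-preserving pair $(\sigma,\tau)$ to a collineation.
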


\begin{proof}
For the graph $\Gamma_3$ the statement is known, see \cite{Die} or \cite[Section 3.7]{Pank-book}.
Suppose that $i\ne 3$. If $|{\mathbb F}|\ge 3$ or $i\neq 1$, then every automorphism of $\Gamma_i$ is an automorphism of $\Gamma_3$
by the first part of Theorem \ref{th1}.

To make the presentation self-contained we sketched the arguments used in the case when $i=3$. 
Every maximal clique of $\Gamma_3$ consists of all anti-flags with a common point or a common hyperplane. 
If the intersection of two distinct maximal cliques is non-empty, then the cliques are of different types, i.e.
one of them corresponds to a point and the other to a hyperplane.
Also, for any two distinct maximal cliques of the same type there is a maximal clique of the opposite type 
intersecting each of them. 
This implies that every automorphism of $\Gamma_3$ preserves the type of every maximal clique or changes the types of all maximal cliques. 
In the first case, the automorphism induces bijective transformations of the set of points and the set of hyperplanes 
which implies that it can be obtained from a semilinear automorphism of $V$.
In the second, it induces bijections of the set of points and the set of hyperplanes to the set of hyperplanes and the set of points (respectively);
such an automorphism corresponds to a semilinear isomorphism of $V$ to $V^*$.
\end{proof}

\begin{rem}{\rm
If $i\in \{1,2\}$, then for any two pairs of $i$-adjacent anti-flags 
there is a linear automorphism of $V$ sending one of these pairs to the other. 
Similarly, any pair of $3$-adjacent anti-flags can be transferred to any other pair of $3$-adjacent anti-flags
by a linear automorphism of $V$ or a transformation of ${\mathcal A}$ induced by a linear isomorphism of $V$ to $V^{*}$.
So, $\Gamma_i$, $i\in \{1,2,3\}$ is edge-transitive.   
If $(p_1,H_1)$ and $(p_2,H_2)$ are $4$-adjacent anti-flags, then the line $\langle p_1, p_2\rangle$ intersects $H_1\cap H_2$
or it intersects $H_1$ and $H_2$ in distinct points. The second possibility occur if and only if $|{\mathbb F}|\ge 3$. 
Therefore, $\Gamma_4$ is edge-transitive  only in the case when $|{\mathbb F}|=2$.
}\end{rem}

The relation $\sim_1$ for the case when $|{\mathbb F}|=2$ will be considered in Section 4. 
The second part of Theorem \ref{th1} will be obtained as a simple consequence  of the description of automorphisms of 
the graph $\Gamma_1$.

\section{Proof of the first part of Theorem \ref{th1}}

\subsection{Recovering  from $\sim_2$ and $\sim_3$}
If ${\mathcal X}$ is a subset of  the set of anti-flags ${\mathcal A}$, then for every $i\in \{1,2,3,4\}$
denote by ${\mathcal X}^{\sim_i}$  the set of all anti-flags $i$-adjacent to every anti-flag from ${\mathcal X}$.

The following statement provides a simple characterization of the relations $\sim_i$, $i\in \{1,2,4\}$ in terms of the relation $\sim_3$.

\begin{prop}\label{prop-sim3}
For distinct anti-flags $A_1,A_2$ the following assertions are fulfilled:
\begin{enumerate}
\item[{\rm (1)}]  If $A_1\sim_1A_2$, then $|\{A_1, A_2\}^{\sim_3}|=1$.
\item[{\rm (2)}]  If $A_1\sim_2 A_2$, then $|\{A_1, A_2\}^{\sim_3}|=0$.
\item[{\rm(3)}]  If $A_1\sim_4 A_2$, then $|\{A_1, A_2\}^{\sim_3}|=2$.
\end{enumerate}
\end{prop}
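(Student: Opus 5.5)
A direct count settles this. An anti-flag $B=(q,U)$ lies in $\{A_1,A_2\}^{\sim_3}$ exactly when it is distinct from both $A_j$ and, for each $j\in\{1,2\}$, either $q=p_j$ or $U=H_j$. So I will enumerate the four combinations of choices, keep only those giving a genuine anti-flag, and check which survive in each of the three cases. Since $B$ must share a component with each of $A_1,A_2$ but equal neither, $B$ takes one component from $A_1$ and the other from $A_2$. The only two candidates are therefore
$$B_1=(p_1,H_2),\qquad B_2=(p_2,H_1).$$
The remaining combinations $q=p_1=p_2$ or $U=H_1=H_2$ would force $A_1\sim_3 A_2$, which is excluded in (1), (2) and (3).

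So in cases (1), (2), (3) we have $p_1\neq p_2$ and $H_1\neq H_2$. Consequently $B_1$ and $B_2$ are distinct from each other and from $A_1$ and $A_2$ whenever they exist. Moreover $B_1$ is $3$-adjacent to $A_1$ (common point) and to $A_2$ (common hyperplane), and likewise for $B_2$. Thus $\{A_1,A_2\}^{\sim_3}$ consists precisely of those $B_i$ that are anti-flags, that is,
$$|\{A_1,A_2\}^{\sim_3}| = [\,p_1\notin H_2\,]+[\,p_2\notin H_1\,].$$

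Reading off the definitions finishes the argument. Under $\sim_1$ exactly one of $p_1\in H_2$, $p_2\in H_1$ holds, so the count is $1$. Under $\sim_2$ both hold, so the count is $0$. Under $\sim_4$ neither holds, so the count is $2$. The only point needing care is the preliminary reduction that $p_1\ne p_2$ and $H_1\ne H_2$ in each case. For $\sim_4$ this is part of the definition. For $\sim_1$ and $\sim_2$ it follows from incidence: $p_1=p_2$ together with $p_1\in H_2$ would contradict $p_2\notin H_2$, and $H_1=H_2$ together with $p_j\in H_{3-j}$ would contradict $p_j\notin H_j$.
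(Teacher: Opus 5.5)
Your proof is correct and follows the same route as the paper: once $p_1\ne p_2$ and $H_1\ne H_2$ are established, the only candidates are $(p_1,H_2)$ and $(p_2,H_1)$, and the count is the number of these that are anti-flags. You are somewhat more explicit than the paper, checking that the candidates are distinct from $A_1$, $A_2$ and from each other; in your reduction step, the case $p_1=p_2$ with $p_2\in H_1$ is the symmetric one you left implicit.
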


\begin{proof}
Suppose that $A_j=(p_j,H_j)$, $j\in \{1,2\}$ are $i$-adjacent for $i\ne 3$.
Then $p_1\ne p_2$ and $H_1\ne H_2$. If an anti-flag $(p,H)$ is $3$-adjacent to  both $A_1,A_2$, then $\{p, H\} = \{p_j, H_{3-j}\}$ for certain $j\in \{1,2\}$.  

If  $A_1\sim_4 A_2$, then $p_j\not\in H_{3-j}$ for each $j$ and
an anti-flag $3$-adjacent to both $A_1,A_2$ is $(p_j,H_{3-j})$ with $j\in \{1,2\}$.

If $A_1\sim_1 A_2$, then  $p_j\in H_{3-j}$ and $p_{3-j}\not\in H_j$ for certain $j\in \{1,2\}$.
In this case,  only the anti-flag $(p_{3-j},H_j)$ is as required.

If $A_1\sim_2 A_2$, then $p_j\in H_{3-j}$ for each $j$ and 
there is no anti-flag $3$-adjacent to both $A_1,A_2$.
\end{proof}

By Mackey \cite{Mackey} (see also \cite[Lemma 3.10]{Pank-book}, also Cohen et al. \cite[Lemma 2.6]{CCG}), the relation $\sim_3$ can be characterized in terms of the relation $\sim_2$ as follows. 

\begin{prop}\label{prop-sim2}
Distinct anti-flags $A_1,A_2$ are $3$-adjacent if and only if for any distinct anti-flags $A'_1,A'_2\in (\{A_1,A_2\}^{\sim_2})^{\sim_2}$
we have 
$$\{A'_1,A'_2\}^{\sim_2})^{\sim_2}=(\{A_1,A_2\}^{\sim_2})^{\sim_2}.$$
\end{prop}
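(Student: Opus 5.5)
We compute the double $\sim_2$-perp $(\{A_1,A_2\}^{\sim_2})^{\sim_2}$ explicitly in each of the four cases for the pair $A_1=(p_1,H_1)$, $A_2=(p_2,H_2)$. The key tool is a description of $\mathcal X^{\sim_2}$ in terms of points and hyperplanes. An anti-flag $(q,U)$ is $2$-adjacent to $(p,H)$ if and only if $q\in H$ and $p\in U$. Hence, for a set $\mathcal X$ of anti-flags with point set $P$ and hyperplane set $\mathcal H$, $\mathcal X^{\sim_2}$ consists of all anti-flags $(q,U)$ with $q\in\bigcap\mathcal H$, $\langle P\rangle\subset U$ and $q\notin U$.

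We first treat the $3$-adjacent case. Say $p_1=p_2=p$ and $H_1\ne H_2$; the case $H_1=H_2$ is dual. Then $\{A_1,A_2\}^{\sim_2}$ is the set of all $(q,U)$ with $q\in H_1\cap H_2$, $p\in U$ and $q\notin U$. The points $q$ occurring fill $S=H_1\cap H_2$, a subspace of codimension $2$, and the hyperplanes $U$ span a family with intersection exactly $p$, since every hyperplane through $p$ not containing $S$ occurs. Therefore $(\{A_1,A_2\}^{\sim_2})^{\sim_2}$ is the set of $(p,H)$ with $p\notin H\supset S$, i.e. the pencil of all anti-flags with point $p$ and hyperplane through $S$ avoiding $p$. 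Any two distinct members $A_1',A_2'$ of this pencil again have common point $p$ and hyperplanes $H_1',H_2'$ with $H_1'\cap H_2'=S$. Hence the same computation returns the same set, and the condition of the statement holds.

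For the converse we suppose $A_1,A_2$ are not $3$-adjacent, so $p_1\ne p_2$ and $H_1\ne H_2$. Put $L=\langle p_1,p_2\rangle$ and $S=H_1\cap H_2$. The first perp consists of the $(q,U)$ with $q\in S$, $L\subset U$ and $q\notin U$. We must check that this set is non-empty. If $L\subset S$, which happens in the $\sim_2$ case when both points lie in $S$, we need $q\in S\setminus U$ with $U\supset L$, and such $q$ exists since $n\ge3$ and $\dim S=n-2\ge\dim L$ (a hyperplane $U\supset L$ not containing $S$ exists when $S\ne L$; for $n=3$ this needs a separate look). Granting non-emptiness, the points $q$ run over $S$ minus pieces, their span is $S$, and the intersection of the hyperplanes $U$ is $L$. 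So the double perp $\mathcal D$ consists of all $(x,W)$ with $x\in L$, $W\supset S$ and $x\notin W$.

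To violate the condition we pick $A_1'=(x,W)$ and $A_2'=(x,W')$ in $\mathcal D$ with a common point and distinct hyperplanes. By the first part, $(\{A_1',A_2'\}^{\sim_2})^{\sim_2}$ consists only of anti-flags with point $x$, whereas $\mathcal D$ contains $A_1,A_2$ with different points. So we only need one point $x\in L$ avoided by two distinct hyperplanes through $S$. The hyperplanes through $S$ form a line's worth in the dual space, at least $3$ of them, and each meets $L$ in a point or contains it. Choosing $x=p_1$, the hyperplane $H_1$ avoids it; we need a second one. Here $|\mathbb F|=2$ and $L\cap S\ne\emptyset$ are the delicate situations, and settling them is the main obstacle, including the degenerate case $L\subset S$, where $\mathcal D$ might be empty. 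I would handle these by counting the hyperplanes through $S$ that meet $L$ in a given point and, if necessary, using the dual choice $A_1'=(x,W)$, $A_2'=(x',W)$ instead.
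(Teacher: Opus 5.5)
The paper does not prove this proposition; it cites Mackey, \cite[Lemma 3.10]{Pank-book} and \cite[Lemma 2.6]{CCG}. Your direct computation of double $\sim_2$-perps is therefore the natural self-contained route, and its main steps are right: the description of $\mathcal X^{\sim_2}$, the pencil computation in the $3$-adjacent case, and, when the first perp is non-empty, the formula $\mathcal D=\{(x,W): x\in L,\ S\subset W,\ x\notin W\}$.

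However, the proposal stops exactly at the step that finishes the converse, and the obstacle you anticipate there does not exist. Since $p_1\notin H_1\supset S$, the only hyperplane through $S$ containing $p_1$ is $\langle S,p_1\rangle$. The pencil of hyperplanes through $S$ has $|\mathbb F|+1\ge 3$ members, so at least two of them avoid $p_1$, e.g.\ $H_1$ and some $W'\ne H_1$. Then $(p_1,H_1)$ and $(p_1,W')$ lie in $\mathcal D$. By your first part, their double perp is the pencil of anti-flags with point $p_1$, which misses $A_2\in\mathcal D$. No case distinction on $|\mathbb F|$ or on $L\cap S$ is needed. Dually, $(x,H_1),(x',H_1)$ with distinct $x,x'\in L\setminus H_1$ also works, because $L\cap H_1$ is a single point.

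The degenerate case is also misidentified, and in a way that matters. $L\subset S$ never occurs, in particular not for $\sim_2$, since $p_1\in L$ but $p_1\notin S$. The first perp $\{(q,U): q\in S,\ L\subset U,\ q\notin U\}$ is empty exactly when $S\subset L$. This happens only for $n=3$ with the point $H_1\cap H_2$ on $\langle p_1,p_2\rangle$, which forces $A_1\sim_4 A_2$. In that case $\mathcal D=\emptyset^{\sim_2}=\mathcal A$, not the empty set as you suggest. This must be settled correctly: if $\mathcal D$ were empty, the condition would hold vacuously and the proposition would be false. With $\mathcal D=\mathcal A$, take any two $3$-adjacent anti-flags; their double perp is a pencil, a proper subset of $\mathcal A$, so the condition fails.

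Your non-emptiness discussion (``$\dim S=n-2\ge\dim L$'') should be replaced by the following. For $n\ge 4$, or for $n=3$ with $S\cap L=\emptyset$, the set $S\setminus L$ of occurring points is non-empty and spans $S$, because $S\cap L$ is at most a point. With these repairs your argument becomes a complete proof.
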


Combining Propositions \ref{prop-sim3} and \ref{prop-sim2} we obtain characterizations of $\sim_1$ and $\sim_4$ in terms of $\sim_2$. 
So, for any distinct $i\in \{2,3\}$ and $j\in \{1,2,3,4\}$ the relation $\sim_j$ can be recovered from $\sim_i$.

\subsection{Recovering from $\sim_4$}
Let ${\mathfrak X}$ be the family of all subsets $\{A_1,A_2\}^{\sim_4}$, where $A_1,A_2$ are $i$-adjacent anti-flags and $i\ne 4$.
We characterize the relations $\sim_i$, $i\in \{1,2,3\}$ in terms of the poset $({\mathfrak X},\subset)$ for almost all cases.

\begin{prop}\label{prop-sim4}
Suppose that $|{\mathbb F}|\ge 3$ or $n\ge 4$.
Then the following assertions are fulfilled: 
\begin{enumerate}
\item[{\rm (1)}] If $A_1\sim_1 A_2$, then $\{A_1,A_2\}^{\sim_4}$ is a minimal and non-maximal element of $({\mathfrak X},\subset)$.
\item[{\rm (2)}] If $A_1\sim_2 A_2$, then $\{A_1,A_2\}^{\sim_4}$ is a minimal and maximal element of $({\mathfrak X},\subset)$.
\item[{\rm (3)}] If $A_1\sim_3 A_2$, then $\{A_1,A_2\}^{\sim_4}$ is a non-minimal and maximal element of $({\mathfrak X},\subset)$.
\end{enumerate}
\end{prop}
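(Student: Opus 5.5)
The plan is to compute the sets $\{A_1,A_2\}^{\sim_4}$ explicitly and reduce the poset $({\mathfrak X},\subset)$ to a combinatorial poset of pairs (set of points, set of hyperplanes). For $A_j=(p_j,H_j)$, an anti-flag $(p,H)$ is $4$-adjacent to both $A_j$ exactly when $p\ne p_j$, $H\ne H_j$, $p\notin H_j$ and $p_j\notin H$. Since $p_j\notin H_j$, the conditions $p\notin H_j$ and $p_j\notin H$ already force $p\ne p_j$ and $H\ne H_j$. Hence
$$\{A_1,A_2\}^{\sim_4}=X(P,{\mathcal H}):=\{(p,H)\in{\mathcal A}:\ p\notin \textstyle\bigcup{\mathcal H},\ H\cap P=\emptyset\},$$
where $P=\{p_1,p_2\}$ and ${\mathcal H}=\{H_1,H_2\}$. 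For $\sim_1$ and $\sim_2$ we have $|P|=|{\mathcal H}|=2$. For $\sim_3$ we have $|P|+|{\mathcal H}|=3$.

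\textbf{Key lemma.} The first and main step is to show that, for the pairs $(P,{\mathcal H})$ that occur, $X(P,{\mathcal H})\subseteq X(P',{\mathcal H}')$ holds if and only if $P'\subseteq P$ and ${\mathcal H}'\subseteq{\mathcal H}$. The "if" direction is trivial. For "only if", suppose $H'\in{\mathcal H}'\setminus{\mathcal H}$. I want to produce $(p,H)\in X(P,{\mathcal H})$ with $p\in H'$. This means choosing a point $p\in H'\setminus(H_1\cup H_2)$ and then a hyperplane $H$ avoiding $p$ and the (at most two) points of $P$. The dual argument handles $p'\in P'\setminus P$.

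This is the step where I expect the hypothesis $|{\mathbb F}|\ge 3$ or $n\ge 4$ to enter, and I expect it to be the main obstacle. Over ${\mathbb F}_2$, the hyperplane $H'$ can be contained in $H_1\cup H_2$ exactly when $H'$ is the third hyperplane through $H_1\cap H_2$. In that configuration I will instead use the other coordinate: I pick the hyperplane $H$ carefully and use $n\ge4$ to ensure enough room. The goal is a witness violating a condition of $X(P',{\mathcal H}')$ coming from some other element of $P'$ or ${\mathcal H}'$. Failing that, I will show directly that such a degenerate $(P',{\mathcal H}')$ does not arise from adjacent anti-flags compatible with the inclusion. I also need every $X(P,{\mathcal H})$ to be nonempty, which is again a small covering argument in $PG(n-1,{\mathbb F})$.

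\textbf{Reading off the poset.} Granting the lemma, the three assertions follow by comparing the pairs $(P,{\mathcal H})$.

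Minimality of the $\sim_1$- and $\sim_2$-sets: any element of ${\mathfrak X}$ contained in one of them has $P'\subseteq P$ and ${\mathcal H}'\subseteq{\mathcal H}$ with $|P'|+|{\mathcal H}'|\ge 3$. If the smaller element came from a $\sim_3$ pair, it would have to be of the form $(\{p_1\},\{H_1,H_2\})$ or a similar one. But then it sits below, not above. More precisely, sets with $|P|=|{\mathcal H}|=2$ are comparable only when they are equal, and the $\sim_3$-sets are strictly larger. So the $\sim_1$- and $\sim_2$-sets are minimal, and the $\sim_3$-sets, having the smallest possible $|P|+|{\mathcal H}|$, are maximal.

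Non-minimality of a $\sim_3$-set, say with data $(\{p\},\{H_1,H_2\})$ where $p\notin H_1\cup H_2$: choose $q\in H_1\setminus H_2$. Then $(p,H_1)\sim_1(q,H_2)$, because $q\in H_1$, $p\notin H_2$ and $q\notin H_2$. This gives a strictly smaller element $X(\{p,q\},\{H_1,H_2\})$. The case of a common hyperplane is dual. The same construction shows that $\sim_1$-sets are not maximal.

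Maximality of a $\sim_2$-set $X(\{p_1,p_2\},\{H_1,H_2\})$, where $p_1\in H_2$ and $p_2\in H_1$: a strictly larger element would come from a $\sim_3$ pair with data such as $(\{p_1\},\{H_1,H_2\})$. That would require $(p_1,H_2)$ to be an anti-flag, which is false since $p_1\in H_2$. The other three candidates fail in the same way.
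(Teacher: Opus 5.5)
Your architecture is the paper's. Your key lemma is the equivalence of \eqref{eq-4.1} and \eqref{eq-4.2}, which the paper derives from Lemma~\ref{lemma-tex4}, and your reading-off of the poset reproduces steps (I)--(IV). Your minimality sentence has the inclusion reversed: $X(P',\mathcal{H}')\subseteq X(P,\mathcal{H})$ forces $P\subseteq P'$ and $\mathcal{H}\subseteq\mathcal{H}'$. Your ``more precisely'' sentence is correct, though.

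The first step, your description of $\{A_1,A_2\}^{\sim_4}$, is false. The conditions $p\notin H_j$ and $p_j\notin H$ do not force $p\neq p_j$ or $H\neq H_j$: every anti-flag $(p_j,H)$ or $(p,H_j)$ other than $A_j$ satisfies both, yet is $3$-adjacent to $A_j$. The correct set is $X(P,\mathcal{H})$ minus the anti-flags with $p\in P$ or $H\in\mathcal{H}$. So your separating witnesses must also satisfy $p\notin P$ and $H\notin\mathcal{H}$, and your recipe does not guarantee this. For example, if $A_1\sim_1A_2$ with $p_2\in H_1$, then $p_1\notin H_1\cup H_2$ and $H_2$ misses both $p_1,p_2$. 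So the recipe may output $(p_1,H)$ or $(p,H_2)$.

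The genuine gap is the key lemma for $|\mathbb{F}|=2$, $n\ge 4$. This is the whole content of the proposition and exactly where $n\ge4$ must be used. For $|\mathbb{F}|=2$, $n=3$ the sets coming from $\sim_1$- and $\sim_3$-pairs are empty by Proposition~\ref{prop-sim4F2}, and the statement fails. You offer only a plan for this case, and it rests on a false claim. A hyperplane $H'\neq H_1,H_2$ is never contained in $H_1\cup H_2$, since a subspace covered by two subspaces lies in one of them. In fact, over $\mathbb{F}_2$ the third hyperplane through $H_1\cap H_2$ contains the entire complement of $H_1\cup H_2$.

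So picking $p\in H'\setminus(H_1\cup H_2)$ is never the problem; picking $H$ is. Over $\mathbb{F}_2$ no hyperplane misses three collinear points, so $p$ must not be the third point of $\langle p_1,p_2\rangle$. In addition you need $p\notin P$ and $H\neq H_1,H_2$. This requires a count:
\begin{itemize}
\item $H'\setminus(H_1\cup H_2)$ has at least $2^{n-3}\ge 2$ points.
\item At most one of these is forbidden: $p_1$ for a $\sim_1$-pair with $p_2\in H_1$, and the third point of $\langle p_1,p_2\rangle$ for a $\sim_2$-pair.
\item Three non-collinear points are missed by exactly $2^{n-3}\ge2$ hyperplanes, of which at most one ($H_2$) is excluded.
\end{itemize}
Similar counts handle the $\sim_3$ data.

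In the dual half ($p'\in P'\setminus P$) the pencil \emph{does} cause trouble. If the hyperplane $H$ chosen through $p'$ is the third hyperplane through $H_1\cap H_2$, no point lies outside $H_1\cup H_2\cup H$. So $H$ must be chosen with $H_1\cap H_2\not\subset H$, and a point other than $p_1$ must remain outside the three hyperplanes. This is exactly what Lemma~\ref{lemma-3hyp} and the explicit constructions in the proof of Lemma~\ref{lemma-tex4} provide.

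Even for $|\mathbb{F}|\ge 3$ you should check that $p\notin P$ and $H\notin\mathcal{H}$ can be arranged. This is easy but not automatic.
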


The case when $|{\mathbb F}|=2$ and $n=3$ will be considered at the end of this subsection. 
To prove Proposition \ref{prop-sim4} we use the following. 

\begin{lemma}\label{lemma-3hyp}
For mutually distinct hyperplanes $H_1,H_2,H_3$ the following assertions are fulfilled: 
\begin{enumerate}
\item[{\rm (1)}] If $H_1\cap H_2\not\subset H_3$ and $|{\mathbb F}|\ge 3$ or $n\ge 4$, 
then there are at least two points outside of $H_1\cup H_2\cup H_3$.
\item[{\rm (2)}] 
If $H_1\cap H_2\subset H_3$ and $|{\mathbb F}|\ge 3$, then
there are more than two points outside of $H_1\cup H_2\cup H_3$.
\end{enumerate}
\end{lemma}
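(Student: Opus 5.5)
The plan is to reduce both assertions to counting points on a single well-chosen line, or in a plane, that avoids the hyperplanes as much as possible. Throughout, a line not contained in a hyperplane $H$ meets $H$ in exactly one point. A line in ${\rm PG}(n-1,{\mathbb F})$ has $|{\mathbb F}|+1$ points (at least $q+1$ in the finite case, infinitely many otherwise).

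\textbf{Part (2).} Here $H_1\cap H_2\subset H_3$, so $H_3$ lies in the pencil of hyperplanes through the codimension-two subspace $S=H_1\cap H_2$. I would pick any point $x\notin H_1\cup H_2\cup H_3$ (it exists: a line through a point outside $H_1\cup H_2$ not meeting $S$ does the job) and take a line $L$ through $x$ that does not meet $S$. Each $H_k$ contains $S$ but not $L$, so $L$ meets each $H_k$ in one point. These three points are distinct, because two hyperplanes of the pencil meet exactly in $S$, and $L\cap S=\emptyset$. Hence $L$ has exactly $|{\mathbb F}|+1-3=|{\mathbb F}|-2$ points outside the union, which is at least $1$. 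This is not enough for ``more than two'', so I would vary $L$. Work in the plane $P=\langle L\rangle$, chosen so that it is complementary to $S$; then the $H_k\cap P$ are three concurrent lines of $P$ through a common point $c=S'\cap P$, or more simply the quotient $V/S$ is $2$-dimensional. The cleaner route is this: the points outside $H_1\cup H_2\cup H_3$ are the points whose image in the projective line ${\rm PG}(V/S)$ avoids three fixed points. That image set has $|{\mathbb F}|-2\ge 1$ points, and each fibre (the points of $\langle S, y\rangle\setminus S$) has at least $|{\mathbb F}|^{n-2}\ge 3$ points since $n\ge3$. So there are at least $3$ points, which is more than two.

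\textbf{Part (1).} Here $H_1\cap H_2\not\subset H_3$, so the three hyperplanes are ``in general position'': $H_1\cap H_2\cap H_3$ has codimension $3$. Choose coordinates so that $H_k=\{x_k=0\}$ for $k=1,2,3$. The points outside the union are those with $x_1x_2x_3\ne0$. If $n\ge4$, the points $(1,1,1,0,\dots)$ and $(1,1,1,1,0,\dots)$ already give two. If $|{\mathbb F}|\ge3$ with $a\ne0,1$, the points $(1,1,1,\dots)$ and $(1,1,a,\dots)$ give two.

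The only mild obstacle is justifying the coordinate choice in part (1), i.e.\ that the three defining functionals are linearly independent. This holds because $H_3\not\supset H_1\cap H_2$ means $\alpha_3\notin\langle\alpha_1,\alpha_2\rangle$, and $\alpha_1,\alpha_2$ are independent since $H_1\ne H_2$. Part (2) is handled similarly, via the fibration over $V/S$ (or by writing $\alpha_3=\alpha_1+c\alpha_2$ with $c\ne0$ after scaling and counting directly).
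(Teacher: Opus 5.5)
Your proof is correct, but it argues differently from the paper. The paper treats infinite fields separately, simply asserting that infinitely many points lie outside the union. For $|\mathbb{F}|=q$ finite it computes $|H_1\cup H_2\cup H_3|$ by inclusion--exclusion. This gives exactly $(q-1)^2q^{n-3}$ points outside in case (1) and $(q-2)q^{n-2}$ in case (2), and it then checks these numbers against the bounds.

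You work structurally instead:
\begin{enumerate}
\item[(1)] Independence of the three defining functionals lets you put $H_k=\{x_k=0\}$ and write down two explicit points.
\item[(2)] You project from $S=H_1\cap H_2$ onto the projective line ${\rm PG}(V/S)$. There are $|\mathbb{F}|-2\ge 1$ admissible base points, and each fibre has $|\mathbb{F}|^{n-2}\ge 3$ points.
\end{enumerate}

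Your route treats finite and infinite fields uniformly, and it actually justifies the infinite case that the paper only states. The paper's route yields exact counts, which are quoted again in the proof of Proposition \ref{prop-finite1}. Nothing is lost on your side, though. The fibration gives $(q-2)q^{n-2}$ directly, and your coordinates give $(q-1)^2q^{n-3}$ just as easily: normalize $x_1=1$, take $x_2,x_3\neq 0$, and let $x_4,\dots,x_n$ be arbitrary.

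In a final write-up, drop the abandoned line-based attempt at the start of part (2). It contains slips, such as calling $\langle L\rangle$ a plane, and the quotient argument does not need it.
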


\begin{proof}
If ${\mathbb F}$ is infinite, then there are infinitely many points outside of  $H_1\cup H_2\cup H_3$  in the both cases. 
Suppose that $|{\mathbb F}|=q$.  If $H_1\cap H_2\not\subset H_3$, then 
$$|H_1\cup H_2\cup H_3|=|H_1|+|H_2|+|H_3|-|H_1\cap H_2|-|H_1\cap H_3|-|H_2\cap H_3|+|H_1\cap H_2\cap H_3|=$$
$$\frac{3(q^{n-1}-1)-3(q^{n-2}-1)+q^{n-3}-1}{q-1}=\frac{(3q^2-3q+1)q^{n-3}-1}{q-1}$$
which means that there are precisely 
$$\frac{q^n-1}{q-1}-|H_1\cup H_2\cup H_3|=\frac{(q^3-3q^2+3q-1)q^{n-3}}{q-1}=(q-1)^2q^{n-3}$$ 
points outside of $H_1\cup H_2\cup H_3$.
This number is not less than $2$ if $q\ge 3$ or $n\ge 4$. 
If $H_1\cap H_2\subset H_3$, then 
$$|H_1\cup H_2\cup H_3|=\frac{3(q^{n-1}-1)-2(q^{n-2}-1)}{q-1}$$
and we obtain that 
$$\frac{q^n-1}{q-1}-|H_1\cup H_2\cup H_3|=\frac{(q^2-3q+2)q^{n-2}}{q-1}=(q-2)q^{n-2}.$$
This number is greater than $2$ if $q\ge 3$.
\end{proof}

\begin{lemma}\label{lemma-tex4}
Suppose that $|{\mathbb F}|\ge 3$ or $n\ge 4$. 
If anti-flags ${\mathcal A}_j=(P_j,H_j)$, $j\in\{1,2\}$ are $i$-adjacent  for $i\ne 4$,
then the following assertions are fulfilled:
\begin{enumerate}
\item[{\rm(1)}] For any point $p$ distinct from $p_1,p_2$ there is an anti-flag $(p',H')\in \{A_1,A_2\}^{\sim_4}$ such that $p\in H'$.
\item[{\rm (2)}] For any  hyperplane $H$ distinct from $H_1,H_2$
there is an anti-flag $(p'',H'')\in \{A_1,A_2\}^{\sim_4}$ such that $p''\in H$.
\end{enumerate}
\end{lemma}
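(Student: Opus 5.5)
The plan is to prove (1) directly and obtain (2) by duality. Write $A_j=(p_j,H_j)$. Unwinding the definition, an anti-flag $(p',H')$ is $4$-adjacent to both $A_1,A_2$ exactly when the following hold:
\begin{itemize}
\item[] $H'\neq H_1,H_2$ and $p_1,p_2\notin H'$;
\item[] $p'\neq p_1,p_2$ and $p'\notin H_1\cup H_2\cup H'$.
\end{itemize}
The last condition already forces $p'\notin H'$. So for (1) I will work in two steps: first choose a suitable hyperplane $H'$ through $p$, and then choose the point $p'$.

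\emph{Step 1: choosing $H'$.} In the dual space, the hyperplanes through $p$ form a projective space ${\rm PG}(n-2,{\mathbb F})$. Since $p\neq p_1,p_2$, the hyperplanes through $p$ that contain $p_1$ (respectively $p_2$) form a hyperplane of this ${\rm PG}(n-2,{\mathbb F})$. I want $H'$ outside these at most two hyperplanes, and also distinct from $H_1,H_2$. Whenever possible I will also require $H_1\cap H_2\not\subset H'$, so that Lemma \ref{lemma-3hyp}(1) applies in Step 2. The conditions $H'\ni p$ and $H'\not\supset H_1\cap H_2$ are compatible unless $p\in H_1\cap H_2$. There are two cases.
\begin{itemize}
\item[] If ${\mathbb F}$ is infinite, or $n\ge 4$, a count shows that ${\rm PG}(n-2,q)$ minus two hyperplanes has $q^{n-2}-q^{n-3}$ or more points. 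This is enough to also avoid $H_1$, $H_2$ and the hyperplanes through $p$ containing $H_1\cap H_2$ (a subspace of codimension $\le 2$ in the dual).
\item[] If $n=3$ and $q\ge 3$, the hyperplanes are lines through $p$. At least $q-1\ge 2$ of them avoid $p_1,p_2$, and I exclude the lines $H_1,H_2$ and the line $p(H_1\cap H_2)$ by a short case check.
\end{itemize}

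\emph{Step 2: choosing $p'$.} If $H_1\neq H_2$, the hyperplanes $H_1,H_2,H'$ are mutually distinct. With $H_1\cap H_2\not\subset H'$, Lemma \ref{lemma-3hyp}(1) gives at least two points outside $H_1\cup H_2\cup H'$; in fact there are $(q-1)^2q^{n-3}$ of them. It remains to avoid $p_1,p_2$, and I split according to the type of adjacency.
\begin{itemize}
\item[] If $A_1\sim_2A_2$, then $p_1\in H_2$ and $p_2\in H_1$, so neither point is among the candidates.
\item[] If $A_1\sim_1A_2$, at most one of $p_1,p_2$ lies outside $H_1\cup H_2$, so at most one candidate is lost.
\item[] If $A_1\sim_3A_2$ with $p_1=p_2$, only one point is to be excluded.
\end{itemize}
In each of these subcases, at least two candidates minus at most one excluded leaves a valid $p'$. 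If $A_1\sim_3A_2$ with $H_1=H_2$, only two hyperplanes are involved. The complement of $H_1\cup H'$ has $q^{n-1}-q^{n-2}\ge 4$ points, so excluding $p_1,p_2$ is harmless.

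\emph{Part (2)} follows by applying (1) in the dual space, via the automorphism $(p,H)\mapsto(u(H)^0,u(p)^0)$, which preserves $\sim_4$ and each $\sim_i$. Under this map, a hyperplane $H$ distinct from $H_1,H_2$ becomes a point distinct from the images of $H_1,H_2$, and the condition $p\in H'$ becomes $p''\in H$.

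The main obstacle is the boundary configuration where the required condition $H_1\cap H_2\not\subset H'$ cannot be arranged, namely $p\in H_1\cap H_2$, with ${\mathbb F}$ small. If $q\ge 3$, Lemma \ref{lemma-3hyp}(2) still gives more than two points, so Step 2 survives. If $q=2$ (and hence $n\ge 4$), the count $(q-2)q^{n-2}=0$ is fatal, so in that case I will instead count more carefully. For example, I would choose $H'$ from the at least $2^{n-2}-2^{n-3}$ available hyperplanes through $p$ so that $H'\cap H_1\cap H_2$ is a proper subspace of $H_1\cap H_2$. This may need a separate small argument, and it is the step I expect to require the most care.
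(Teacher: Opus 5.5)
There is a genuine gap in the case $|{\mathbb F}|=2$ (so $n\ge 4$), $H_1\neq H_2$ and $p\in H_1\cap H_2$, which is exactly where the lemma is delicate. Here Lemma \ref{lemma-3hyp}(2) yields $(q-2)q^{n-2}=0$ points, so Step 2 works only if $H_1\cap H_2\not\subset H'$, and you never show that such an $H'$ exists. Two of your statements about this case are also wrong.

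\emph{Compatibility.} You claim that $p\in H'$ and $H_1\cap H_2\not\subset H'$ are incompatible when $p\in H_1\cap H_2$. They are not: for $n\ge 4$ the subspace $H_1\cap H_2$ has projective dimension $n-3\ge 1$, so it is not $\{p\}$. Incompatibility occurs only for $n=3$, where $q\ge 3$ and Lemma \ref{lemma-3hyp}(2) rescues you.

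\emph{Counting.} The bare count in your first bullet of Step 1 does not produce the required $H'$. For $q=2$, $n=4$ there are only $2^{n-2}-2^{n-3}=2$ hyperplanes through $p$ missing $p_1,p_2$. All three hyperplanes of the pencil through $H_1\cap H_2$ pass through $p$ and must be avoided.

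Your closing remark (``may need a separate small argument'') leaves precisely this step open, so as written the proof is incomplete.

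Within your framework the fix is short. Write $H_j=\ker\varphi_j$; over ${\mathbb F}_2$ the pencil is $\{H_1,H_2,H_3\}$ with $H_3=\ker(\varphi_1+\varphi_2)$.

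\emph{The $\sim_1$/$\sim_2$ case with $p_2\in H_1$.} Here $p\notin\langle p_1,p_2\rangle$, so there are $2^{n-3}\ge 2$ candidates. $H_1$ is already excluded because it contains $p_2$. Since $\varphi_1(p_1)=1$, exactly one of $H_2,H_3$ contains $p_1$. Hence at most one candidate is lost.

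\emph{The $\sim_3$ case with $p_1=p_2$.} One gets $p_1\in H_3$, and removing $H_1,H_2$ from the $2^{n-2}\ge 4$ candidates is harmless.

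The paper avoids counting here by an explicit construction. When $p\notin\langle p_1,p_2\rangle$ (automatic when $p\in H_1\cap H_2$ and $p_2\in H_1$), it takes $H'=\langle \ell,S\rangle$. Here $\ell$ joins $p$ to the third point of $\langle p_1,p_2\rangle$, and $S$ is a hyperplane of $H_1$ through $\ell\cap H_1$, missing $p_2$ and distinct from $H_1\cap H_2$; such $S$ exists because $n\ge 4$. Then $H'\cap H_1=S$, which gives $H_1\cap H_2\not\subset H'$ directly. The paper also reduces the $\sim_3$ case to the $\sim_1$ case by replacing $A_2$ with an auxiliary anti-flag, whereas you treat it directly. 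That part of your plan, the remaining case checks, and the duality argument for (2) are fine.
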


\begin{rem}{\rm
If $|{\mathbb F}|\ge 3$ or $n\ge 4$, then Lemma \ref{lemma-tex4} shows that 
$$(\{A_1,A_2\}^{\sim_4})^{\sim_4}=\{A_1,A_2\}$$
for any $i$-adjacent anti-flags $A_1,A_2$ with $i\ne 4$. 
In the case when $|{\mathbb F}|=2$ and $n=3$, this fails, see Proposition \ref{prop-sim4F2}.
}\end{rem}

\begin{proof}[Proof of Lemma \ref{lemma-tex4}]
The statement (2) follows from the statement (1) by duality.
Also, it is sufficient to prove the statement (1) for the case when $A_1,A_2$ are $i$-adjacent with $i\in \{1,2\}$.

Indeed, if $p_1=p_2$, then we take any point $p'_2\in H_1\setminus (H_2\cup\{p\})$.
The anti-flags $A_1=(p_1,H_1)$ and $A'_2=(p'_2,H_2)$ are $1$-adjacent.
If $(p',H')$ belongs to $\{A_1,A'_2\}^{\sim_4}$, then $p'\not\in H_2$ and $p_1=p_2\not\in H'$  which means that $(p',H')$ 
is $4$-adjacent to $A_2$.

Similarly, if $H_1=H_2$, then we consider $A'_2=(p_2, H'_2)$, where $H'_2$ is a hyperplane which contains $p_1$ and does not contain $p_2$.
As above, $A_1$ and $A'_2$ are $1$-adjacent.
If $(p',H')$ belongs to $\{A_1,A'_2\}^{\sim_4}$,
then $p'\not\in H_1=H_2$ and $p_2\not\in H'$, i.e.  $(p',H')$ is $4$-adjacent to $A_2$.

From this moment, we suppose that the anti-flags $A_1,A_2$ are $i$-adjacent for $i\in \{1,2\}$.
Then $p_1\ne p_2$, $H_1\ne H_2$ and at least one of $p_j$ is contained in $H_{3-j}$. 
Without loss of generality we assume that $p_2\in H_1$. 

Consider the case when $|{\mathbb F}|\ge 3$. 

Suppose that $p$ is not on the line $\langle p_1,p_2\rangle$. 
By our assumption, $H_1$ intersects this line  precisely in $p_2$. 
The intersecting point of $H_2$ and $\langle p_1,p_2\rangle$ may be distinct from $p_1$. 
Since there are at least four points on $\langle p_1,p_2\rangle$,
the line contains a point $\tilde{p}\not\in H_1\cup H_2$ distinct from $p_1,p_2$.  
We take any hyperplane $H'$ passing through $p$ and intersecting $\langle p_1,p_2\rangle$ precisely  in $\tilde{p}$. 

Suppose now that $p\in \langle p_1,p_2\rangle$. Then $p\not\in H_1$ (since $p_2\in H_1$ and $\langle p_1,p_2\rangle\not\subset H_1$).
If $p$ is the intersecting point of  $H_2$ and $\langle p_1,p_2\rangle$, 
then  there is a hyperplane $H'$ intersecting $\langle p_1,p_2\rangle$ precisely  in $p$ and distinct from $H_2$.
If $p\not\in H_2$, then we take any hyperplane $H'$ which  intersects  $\langle p_1,p_2\rangle$ precisely in $p$.

In each of these cases,  we have $H'\ne H_1,H_2$, $p\in H'$ and $p_1,p_2\not\in H'$. 
By Lemma \ref{lemma-3hyp}, there are at least two points outside of $H_1\cup H_2\cup H'$
which implies the existence of a point $p'\not\in H_1\cup H_2\cup H'$ distinct from $p_1$. 
The anti-flag $(p',H')$ is $4$-adjacent to both $A_1,A_2$.

Consider the case when $|{\mathbb F}|=2$ and $n\ge 4$. 

%Note that the line $\langle p_1,p_2\rangle$ does not intersect $H_1\cap H_2$
%(indeed, if this line intersects $H_1\cap H_2$, then, since $p_2\in H_1\setminus H_2$, it is contained in $H_1$ which is impossible). 
If $p\in \langle p_1,p_2\rangle$, then $p\not\in H_1$. 
We choose a hyperplane $S$ of $H_1$ distinct from $H_1\cap H_2$ and such that $p_2\not\in S$. 
The hyperplane $H'=\langle p, S\rangle$ does not contain $H_1\cap H_2$
and intersects $\langle p_1,p_2\rangle$ precisely in $p$.

Suppose that $p\not\in \langle p_1,p_2\rangle$. 
Let $\ell$ be the line joining $p$ with the point  of $\langle p_1,p_2\rangle$ distinct from $p_1,p_2$
and let $\tilde{p}$ be the intersecting point of $H_1$ and $\ell$. 
The points $p$ and $\tilde{p}$ are not necessarily distinct.
On the other hand, $\tilde{p}\ne p_2$ (otherwise, $\langle p_1,p_2\rangle=\ell$ and $p\in \langle p_1,p_2\rangle$ which contradicts our assumption).
Since $n\ge 4$, hyperplanes are not lines. Therefore, there is a hyperplane $S$ of $H_1$ distinct from $H_1\cap H_2$ 
and such that $\tilde{p}\in S$, $p_2\not\in S$. 
The hyperplane $H'=\langle \ell, S\rangle $ contains $p$,  does not contain $H_1\cap H_2$
and intersects $\langle p_1,p_2\rangle$ in the point distinct from $p_1,p_2$.

In each of these cases,  we have $H_1\cap H_2\not\subset H'$, $p\in H'$ and $p_1,p_2\not\in H'$. 
Lemma \ref{lemma-3hyp} implies the existence of a point $p'\not\in H_1\cup H_2\cup H'$ distinct from $p_1$. 
The anti-flag $(p',H')$ is as required.  
\end{proof}

\begin{proof}[Proof of Proposition \ref{prop-sim4}]
Let $A_j=(p_j,H_j)$, $j\in\{1,2\}$ be $i$-adjacent anti-flags 
and let $A'_j=(p'_j,H'_j)$, $j\in\{1,2\}$ be $i'$-adjacent anti-flags 
such that  each of $i,i'$ is distinct from $4$.
Show that the inclusions
\begin{equation}\label{eq-4.1}
\{A_1,A_2\}^{\sim_4}\subset \{A'_1,A'_2\}^{\sim_4}
\end{equation}
and
\begin{equation}\label{eq-4.2}
\{p'_1,p'_2\}\subset \{p_1,p_2\}\;\mbox{ and }\;\{H'_1,H'_2\}\subset \{H_1,H_2\}
\end{equation}
are equivalent.
The implication $(2)\Rightarrow (1)$ is obvious.
Conversely,  if $p'_j$ is distinct from $p_1,p_2$, then, by Lemma \ref{lemma-tex4}, 
there is $(p,H)\in \{A_1,A_2\}^{\sim_4}$ such that $p'_j\in H$ which means that $(p,H)\not\in\{A'_1,A'_2\}^{\sim_4}$. 
Similarly, if $H'_j$ is distinct  from $H_1,H_2$, then there exists $(p,H)\in \{A_1,A_2\}^{\sim_4}$ such that $p\in H'_j$
and $(p,H)\not\in\{A'_1,A'_2\}^{\sim_4}$ again. 
So, $(1)\Rightarrow (2)$.

We prove the statement in several steps.

(I). Suppose that each of $i,i'$ is distinct from $3$.
Then 
$$p_1\ne p_2,\;p'_1\ne p'_2,\; H_1\ne H_2,\; H'_1\ne H'_2.$$
The inclusion \eqref{eq-4.2} implies that 
$$\{p'_1,p'_2\}=\{p_1,p_2\}\;\mbox{ and }\;\{H'_1,H'_2\}=\{H_1,H_2\}.$$
Since $p_j\in H_{3-i}$ for at least one $j\in\{1,2\}$, the inclusion \eqref{eq-4.1}  holds if and only if 
$\{A_1,A_2\}=\{A'_1, A'_2\}$.

(II). For $i=3$ we have $p_1=p_2$ or $H_1=H_2$ which means that \eqref{eq-4.2} holds if and only if  $\{A_1,A_2\}=\{A'_1, A'_2\}$.
Since the inclusions \eqref{eq-4.1} and \eqref{eq-4.2} are equivalent, $\{A_1,A_2\}^{\sim_4}$ is a maximal element of the poset $({\mathfrak X},\subset)$.

(III). Consider the case when $i\in \{1,2\}$ and $i'=3$. 
Since $p'_{j}\not\in H'_{3-j}$ for each $j\in \{1,2\}$, the inclusion \eqref{eq-4.2}  guarantees that $p_j\not\in H_{3-j}$ for a certain $j\in \{1,2\}$
and, consequently, $i=1$. So, the inclusion \eqref{eq-4.1} is impossible for $i=2$.
This observation and the previous steps show that
$\{A_1,A_2\}^{\sim_4}$ is a minimal and maximal element of the poset $({\mathfrak X},\subset)$ if $i=2$. 

Suppose that $i=1$ ($i'=3$ as above). Without loss of generality we assume that $p_1\not\in H_2$ and $p_2\in H_1$. 
Then the inclusion \eqref{eq-4.1} holds if and only if
$\{A'_1,A'_2\}$ is 
$$\{A_1=(p_1,H_1), (p_1,H_2)\}\;\mbox{ or }\;\{(p_1,H_2),A_2=(p_2,H_2)\}.$$
By the step (II), the inclusion reverse to \eqref{eq-4.1}  fails for each of these cases.
Therefore, $\{A_1,A_2\}^{\sim_4}$ is a minimal and non-maximal element of the poset $({\mathfrak X},\subset)$ if $i=1$.

(IV). If $A'_1,A'_2$ are $3$-adjacent, then $\{A'_1,A'_2\}^{\sim_4}$ is a maximal element of the poset $({\mathfrak X},\subset)$.
We need to show that this element is non-minimal, i.e. the inclusion \eqref{eq-4.1} holds for some $1$-adjacent $A_1,A_2$.
If $p'_1\ne p'_2$ and $H'_1=H'_2$, then we take any hyperplane $H_2$ which contains $p'_1$ and does not contain $p'_2$ 
(such a hyperplane is distinct from $H'_1=H'_2$) and put 
$$A_1=A'_1=(p'_1, H'_1),\; A_2=(p'_2,H_2).$$
In the case when $p'_1=p'_2$ and $H'_1\ne H'_2$, there is a point $p_2\in H'_1\setminus H'_2$ 
(such a point is distinct from $p'_1=p'_2$) and 
$$A_1=A'_1=(p'_1, H'_1),\; A_2=(p_2,H'_2)$$
are as required.
\end{proof}

Proposition \ref{prop-sim4} does not cover the case when $|{\mathbb F}|=2$ and $n=3$.
In this case, we characterize $\sim_2$  in terms of $\sim_4$.
Then $\sim_1$ and $\sim_3$  can be characterized in terms of $\sim_4$ by Propositions \ref{prop-sim3} and \ref{prop-sim2}.

\begin{prop}\label{prop-sim4F2}
Suppose that $|{\mathbb F}|=2$ and $n=3$. Then anti-flags $A_1,A_2$ are $2$-adjacent if and only if 
$\{A_1,A_2\}^{\sim_4}$ is non-empty.
\end{prop}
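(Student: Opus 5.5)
The plan is a direct case analysis in the Fano plane ${\rm PG}(2,2)$, using only elementary incidence facts: every line has $3$ points, every point lies on $3$ lines, and the union of two distinct lines misses exactly $2$ points. Those two points span a line through the intersection point of the two lines. Dually, for two distinct points $x,y$ there are exactly $2$ lines avoiding both, and both pass through the third point of $\langle x,y\rangle$. Write $A_j=(p_j,H_j)$. An anti-flag $(p,H)$ lies in $\{A_1,A_2\}^{\sim_4}$ if and only if the following hold: $p\ne p_1,p_2$, $H\ne H_1,H_2$, $p\notin H_1\cup H_2$, and $p_1,p_2\notin H$. The key step in each case is to pin down the few candidates for $p$ and for $H$ and compare them.

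\emph{Case $\sim_3$.} By duality it suffices to treat $p_1=p_2$, so $H_1\ne H_2$. The complement of $H_1\cup H_2$ is $\{p_1,q\}$, so necessarily $p=q$. The lines avoiding both $p_1$ and $q$ are exactly two, and $H_1,H_2$ are such lines. Hence no admissible $H$ exists and $\{A_1,A_2\}^{\sim_4}=\emptyset$.

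\emph{Cases $\sim_1,\sim_2,\sim_4$.} Here $p_1\ne p_2$ and $H_1\ne H_2$. Let $\{a,b\}$ be the complement of $H_1\cup H_2$. Let $r$ be the third point of $\langle p_1,p_2\rangle$, and let $L,L'$ be the two lines avoiding $p_1$ and $p_2$; both pass through $r$.

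For $\sim_4$, both $p_1,p_2\notin H_1\cup H_2$, so $\{p_1,p_2\}=\{a,b\}$. Then no $p$ is available and the set is empty.

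For $\sim_1$, assume $p_2\in H_1$ and $p_1\notin H_2$. Then $p_1\in\{a,b\}$, say $p_1=a$, and $p_2\notin\{a,b\}$, so the only candidate is $p=b$. Since $p_1,p_2\notin H_2$, we have $H_2\in\{L,L'\}$, say $H_2=L$, so $H$ must be $L'$. The points $b$ and $p_2$ are distinct from $r$: if $b\in\langle p_1,p_2\rangle$, this line would be $\langle a,b\rangle$, which contains $c=H_1\cap H_2$, forcing $p_2=c\in H_2$, a contradiction. So $b\notin\langle p_1,p_2\rangle$ and $b\notin H_2=L$. The lines $L,L'$ cover all points outside $\langle p_1,p_2\rangle$, hence $b\in L'$. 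Therefore no $H$ works and the set is empty.

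For $\sim_2$, we have $p_1\in H_2$ and $p_2\in H_1$. The line $\langle p_1,p_2\rangle$ meets $H_1$ only in $p_2$ and $H_2$ only in $p_1$, so $r\notin H_1\cup H_2$, i.e. $r=a$ say. The candidate point is then $p=b$. Neither $L$ nor $L'$ equals $H_1$ or $H_2$, since each $H_j$ contains one of $p_1,p_2$. The point $b\ne r$ lies on exactly one of $L,L'$, so we take $H$ to be the other one. Then $(b,H)\in\{A_1,A_2\}^{\sim_4}$, so the set is non-empty.

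The main obstacle is the $\sim_1$ case, where one must verify carefully that the unique candidate point lies on the unique candidate line. The other cases are immediate once the candidate sets $\{a,b\}$ and $\{L,L'\}$ are written down.
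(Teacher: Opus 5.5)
Your proof is correct and follows the paper's route: a direct incidence case analysis in the Fano plane. You exhibit an element of $\{A_1,A_2\}^{\sim_4}$ when $A_1\sim_2 A_2$ and show emptiness in the other cases. The paper compresses the $\sim_1$, $\sim_3$ and $\sim_4$ cases into one remark, that at most one point lies outside $H_1\cup H_2\cup\{p_1,p_2\}$, and leaves implicit why the line $H$ then cannot exist; your explicit comparison of the candidate sets $\{a,b\}$ and $\{L,L'\}$ supplies exactly that step.
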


\begin{proof}
Since $n=3$, hyperplanes are lines. 
Let $A_j=(p_j,\ell_j)$, $j\in \{1,2\}$ be distinct anti-flags.

Suppose that $A_1,A_2$ are $2$-adjacent. Then $p_j\in \ell_{3-j}$ for each $j\in \{1,2\}$. 
There is a unique line $\ell$ which  intersects $\ell_1,\ell_2$ in two distinct points different from $p_1,p_2$
and there is a unique point $p\not\in \ell\cup \ell_1\cup \ell_2$. 
The anti-flag $(p,\ell)$ is the unique element of $\{A_1,A_2\}^{\sim_4}$. 

In the remaining cases (if $A_1,A_2$ are $3$-adjacent, then we assume that $\ell_1\ne \ell_2$ by duality), 
there is at most one point outside of $\ell_1\cup \ell_2\cup\{p_1,p_2\}$ which means that
$\{A_1,A_2\}^{\sim_4}$ is empty.
\end{proof}

\subsection{Recovering from $\sim_1$. The case when $|{\mathbb F}|\ge 4$}
Let ${\mathcal C}$ be a coclique  of the graph $\Gamma_1$, i.e. a subset of ${\mathcal A}$, where any two distinct anti-flags are not $1$-adjacent. 
We say that ${\mathcal C}$ is {\it linear} if all anti-flags from ${\mathcal C}$ have a common hyperplane 
and there is a line containing the point of every anti-flag from ${\mathcal C}$. 
The coclique is said to be {\it dually linear} if all anti-flags from ${\mathcal C}$ have a common point and 
there is a line of the dual projective space containing the hyperplane of every anti-flag from ${\mathcal C}$. 
Any two distinct anti-flags in a linear or dually linear coclique are $3$-adjacent.
Every bijective transformation of ${\mathcal A}$ induced by a semilinear automorphism of $V$ to $V^*$
sends linear cocliques to dually linear cocliques and vice versa.

\begin{lemma}\label{lemma-1.1}
If ${\mathcal C}$ is a linear or dually linear coclique of $\Gamma_1$, then for every anti-flag $A\not\in {\mathcal C}$ one of the following possibilities is realized:
\begin{enumerate}
\item[$(1)$] $A^{\sim_1}\cap {\mathcal C}$ is empty,
\item[$(2)$] $A^{\sim_1}\cap {\mathcal C}$ is a singleton,
\item[$(3)$] $A^{\sim_1}\cap {\mathcal C}$ contains all anti-flags of ${\mathcal C}$ except one,
\item[$(4)$] ${\mathcal C}\subset A^{\sim_1}$.
\end{enumerate}
\end{lemma}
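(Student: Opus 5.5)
By the duality remark preceding the lemma, a semilinear isomorphism $V\to V^*$ induces an automorphism of $\Gamma_1$ that interchanges linear and dually linear cocliques. So it suffices to treat a linear coclique ${\mathcal C}$. Write ${\mathcal C}=\{(q,H): q\in Q\}$, where $H$ is the common hyperplane, $\ell$ is the line containing the points, and $Q\subset \ell\setminus H$. Since $\ell\not\subset H$, the line $\ell$ meets $H$ in a single point $h$. Let $A=(p,H')\notin{\mathcal C}$. Our plan is to determine, for each $q\in Q$, when $(q,H)\sim_1 A$, using only the incidences $q\in H'$ and $p\in H$.

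By definition, $(q,H)\sim_1(p,H')$ holds exactly when one of the following occurs:
\begin{enumerate}
\item[(a)] $q\in H'$ and $p\notin H$;
\item[(b)] $q\notin H'$ and $p\in H$.
\end{enumerate}
The status of $p\in H$ does not depend on $q$. Hence:
\begin{itemize}
\item if $p\notin H$, then $A^{\sim_1}\cap{\mathcal C}=\{(q,H): q\in Q\cap H'\}$;
\item if $p\in H$, then $A^{\sim_1}\cap{\mathcal C}=\{(q,H): q\in Q\setminus H'\}$.
\end{itemize}
In both cases we need $(q,H)\ne A$. This holds automatically when $p\in H$, since $q\notin H$. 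When $p\notin H$ it also holds, because $A\notin{\mathcal C}$ means either $H'\ne H$ or $p\notin Q$, and in either case $(q,H)\neq A$ for all $q\in Q$.

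The key point is that $|Q\cap H'|\le 1$ unless $\ell\subset H'$, since a hyperplane either contains a line or meets it in at most one point. Accordingly, $Q\cap H'$ is empty, a singleton, or all of $Q$, and its complement $Q\setminus H'$ is all of $Q$, all of $Q$ but one element, or empty. For $p\notin H$ this yields possibilities (1), (2) or (4). For $p\in H$ it yields (4), (3) or (1). This covers every case, so the lemma follows.

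There is no substantial obstacle. The only points needing care are the definition of a coclique as a set, where ``all except one'' should be read loosely if $|{\mathcal C}|$ is small, and checking that $A\ne(q,H)$, which we did above.
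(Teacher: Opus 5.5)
Your argument is correct and follows essentially the paper's route. Both reduce to the linear case by duality, split on whether the point of $A$ lies on the common hyperplane, and use that the hyperplane of $A$ either contains $\ell$ or meets it in a single point. Your observation that $(q,H)\sim_1 A$ holds exactly when precisely one of $q\in H'$, $p\in H$ holds (in your notation) compresses the paper's case-by-case bookkeeping. It also handles the paper's last subcase more carefully: there the paper states the intersection is the singleton $\{(p',H')\}$ without checking that this anti-flag actually lies in $\mathcal C$. If it does not, the intersection is empty, which is still an allowed outcome.
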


\begin{proof}
We can assume that ${\mathcal C}$ is linear by duality.
Let $A=(p,H)\in {\mathcal A}\setminus {\mathcal C}$ and let $\ell,H'$ be the line and hyperplane corresponding to 
the linear coclique ${\mathcal C}$.

Suppose first that $p \in H'$. If $\ell\subset H$, then $A$ is $2$-adjacent to every anti-flag from ${\mathcal C}$
and, consequently, $A^{\sim_1}\cap {\mathcal C}=\emptyset$.
Assume that $\ell\not\subset H$ and $p'$ is the intersecting point of $\ell$ and $H$. If the points of anti-flags of ${\mathcal C}$ are distinct from $p'$, then $A$ is $1$-adjacent to all anti-flags of ${\mathcal C}$,
i.e. ${\mathcal C}\subset A^{\sim_1}$.
Otherwise,  the anti-flag $(p',H')$ belongs to ${\mathcal C}$ and $A^{\sim_1}\cap {\mathcal C}$ contains all anti-flags of ${\mathcal C}$ except $(p',H')$.

Suppose now that $p\not\in H'$.
If $\ell\subset H$, then $A$ is $1$-adjacent to all anti-flags from ${\mathcal C}$ and we obtain that ${\mathcal C}\subset A^{\sim_1}$.
Assume that $\ell\not\subset H$ and $p'$ is the intersecting point of $\ell$ and $H$.
Suppose first that $p'\in H'$. If $p\not\in \ell$, then all elements of ${\mathcal C}$ are $4$-adjacent to $A$ while if $p\in \ell$ then $(p,H')\sim_3 A$ and all remaining elements of 
$\mathcal C$ are $4$-adjacent to $A$. 
In both cases $A^{\sim_1}\cap{\mathcal C} = \emptyset$. 
On the other hand, let $p'\not\in H'$. Then $(p',H')\sim_1 A$. If $p\not\in \ell$, then all anti-flags in ${\mathcal C}\setminus\{(p',H')\}$ are $4$-adjacent to $A$ while if $p\in \ell$ then 
$(p,H')\sim_3 A$ and all elements of ${\mathcal C}\setminus\{(p',H'), (p,H')\}$ are $4$-adjacent to $A$. 
In both cases $A^{\sim_1}\cap{\mathcal C} = \{(p',H')\}$.
\end{proof}

We say that a coclique ${\mathcal C}$ of $\Gamma_1$ satisfies the property (L) if 
for every anti-flag $A\not\in {\mathcal C}$ one of the possibilities described in Lemma \ref{lemma-1.1} is realized.
This property holds for every coclique of $\Gamma_1$ containing less than four elements. 
Note that linear and dually linear cocliques of $\Gamma_1$ containing four elements exist only when $|{\mathbb F}|\ge 4$.

\begin{prop}\label{prop-sim1}
Suppose that $|{\mathbb F}|\ge 4$. A $4$-element coclique of $\Gamma_1$ is linear or dually linear if and only if it satisfies the property {\rm (L)}. 
\end{prop}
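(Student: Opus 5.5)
The implication ``linear or dually linear $\Rightarrow$ (L)'' is exactly Lemma \ref{lemma-1.1}, so only the converse needs proof. Let ${\mathcal C}=\{A_1,\dots,A_4\}$, $A_j=(p_j,H_j)$, be a $4$-element coclique of $\Gamma_1$ satisfying (L). By (L), $|A^{\sim_1}\cap{\mathcal C}|\in\{0,1,3,4\}$ for every $A\notin{\mathcal C}$. So it suffices to show that if ${\mathcal C}$ is neither linear nor dually linear, then some anti-flag $A\notin{\mathcal C}$ is $1$-adjacent to exactly two members of ${\mathcal C}$.

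\emph{Key test anti-flags.} Let $p$ be a point with $p\ne p_j$ for all $j$. Using $|{\mathbb F}|\ge 4$, I will choose a hyperplane $H$ with $p\notin H$ that avoids all $p_j$ and differs from all $H_j$; this is a routine counting argument, since a line through $p$ carries enough points. For $A=(p,H)$ we have $p_j\notin H$ and $p\ne p_j$, $H\ne H_j$. Hence $A\sim_1 A_j$ if and only if $p\in H_j$, so $A^{\sim_1}\cap{\mathcal C}=\{A_j: p\in H_j\}$. Consequently:
\[
\text{every point outside } \{p_1,\dots,p_4\} \text{ lies in } 0,1,3 \text{ or } 4 \text{ of the } H_j.
\]
Dually, every hyperplane distinct from all $H_j$ contains $0,1,3$ or $4$ of the points $p_j$.

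\emph{First consequences.} Since $|{\mathbb F}|\ge 4$, a hyperplane is not covered by finitely few (at most three) other hyperplanes together with four points. So each distinct hyperplane occurring among the $H_j$ contains a point lying in no other distinct $H_k$. Hence each distinct hyperplane occurs with multiplicity $1$, $3$ or $4$. The possible multiplicity patterns are therefore $4$, $3+1$ and $1+1+1+1$.

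In the pattern $1+1+1+1$, a generic point of $H_1\cap H_2$ must lie in a third $H_k$. As a subspace is not the union of two proper subspaces, $H_1\cap H_2\subset H_3$ or $H_1\cap H_2\subset H_4$, and the same holds for every pair. I will then show that in fact all four $H_j$ contain a common codimension-$2$ subspace, i.e.\ they lie in a pencil (a line of the dual space). Otherwise a generic point of some $H_j\cap H_k$ lies in exactly three of them, which is allowed, but a generic point of another pair's intersection lies in exactly two, which is not; I expect to pin down this step by a short case check.

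Dually, the $p_j$ are either all equal, of pattern $3+1$, or four distinct collinear points. For four distinct points, the line through any two of them must contain a third; otherwise a hyperplane through exactly those two exists.

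\emph{Combining.} The main obstacle is the case elimination, which I will organise as follows.
\begin{itemize}
\item If all $H_j$ are equal, the $p_j$ are distinct, hence collinear by the dual consequence, and ${\mathcal C}$ is linear.
\item If all $p_j$ are equal, the $H_j$ are distinct and lie in a pencil, and ${\mathcal C}$ is dually linear.
\item If some multiplicity pattern is $3+1$, say $H_1=H_2=H_3\ne H_4$, then $p_1,p_2,p_3$ are distinct. I will use the coclique condition, that no two $A_j$ are $1$-adjacent, to constrain $p_4$ relative to $H_1$ and $p_1,p_2,p_3$ relative to $H_4$. Then I will either exhibit directly a test anti-flag $(p,H)$ whose $1$-adjacency set has size $2$, allowing $p$ to equal some $p_j$ or $H$ to equal some $H_j$, or reach a contradiction with the point/hyperplane patterns above.
\item The case where both the $p_j$ and the $H_j$ are all distinct (collinear points, pencil of hyperplanes) is handled similarly. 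The coclique condition forces either $p_j\in H_k$ for all $j\ne k$ or a configuration in which a suitable test anti-flag separates two members of ${\mathcal C}$ from the other two.
\end{itemize}
I expect these mixed cases to be the laborious part. I would attack each with explicit test anti-flags constructed using the freedom $|{\mathbb F}|\ge 4$.
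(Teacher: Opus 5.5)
\textbf{There is a genuine gap: the decisive cases are only announced, and the tools you build cannot settle them.}

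Your reduction is right. The forward direction is Lemma~\ref{lemma-1.1}, and for the converse you need some $A\notin{\mathcal C}$ that is $1$-adjacent to exactly two members of ${\mathcal C}$. The easy cases (all $H_j$ equal, or all $p_j$ equal) can be handled with your ``pure'' tests.

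A pure test $(p,H)$, with $H$ avoiding every $p_j$, only records the number of $j$ with $p\in H_j$; dually, it records the number of $j$ with $p_j\in H$. There are $4$-element cocliques that are neither linear nor dually linear yet pass every such test:
\begin{itemize}
\item[(a)] $p_1,\dots,p_4$ are distinct points of a line $\ell$, and $H_1,\dots,H_4$ are distinct hyperplanes through a codimension-$2$ subspace $S$ with $\ell\cap S=\emptyset$. They are arranged so that $p_1\in H_2$, $p_2\in H_1$, $p_3\in H_4$, $p_4\in H_3$, with no other incidences.
\item[(b)] $H_1=H_2=H_3\ne H_4$, and $p_1,\dots,p_4$ are distinct points of a line, none of them in $H_1\cup H_4$.
\end{itemize}
In both, every point lies in $0,1,3$ or $4$ of the $H_j$, and every hyperplane contains $0,1$ or $4$ of the $p_j$. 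In (b), and in (a) with all incidences removed (possible once $|\mathbb{F}|\ge 7$), the relaxations you propose ($p=p_j$ or $H=H_j$) still never give an intersection of size $2$. So your structural conclusions (multiplicity pattern, pencil, collinearity) leave exactly the hard configurations open. Your plan of ``explicit test anti-flags'' does not say how to reach them.

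\textbf{The missing idea is a test that uses both sides at once, as in the paper.}
\begin{itemize}
\item When the $p_j$ and the $H_j$ are all distinct: choose $p\in H_1\setminus(H_2\cup H_3\cup H_4)$. Then choose $H$ meeting the five-point set $\{p_1,\dots,p_4,p\}$ in exactly one prescribed point $p_k\ne p$; it exists by Lemma~\ref{lemma-F4}, which is precisely where $|\mathbb{F}|\ge 4$ enters. Then $(p,H)$ is $1$-adjacent to $A_1$ (as $p\in H_1$, $p_1\notin H$) and to $A_k$ (as $p_k\in H$, $p\notin H_k$), and to nothing else in ${\mathcal C}$.
\item When $H_1=H_2=H_3\ne H_4$: take $H\ne H_1$ meeting $\{p_1,\dots,p_4\}$ only in $p_3$, and $p\in H_1\setminus(H_4\cup H)$. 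This gives exactly $A_1,A_2$, since $A_3$ becomes $2$-adjacent.
\end{itemize}
With this construction and its obvious variants, the paper disposes of all cases in a few lines, with no pencil or collinearity analysis.

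Separately, some of your auxiliary claims fail for small parameters:
\begin{itemize}
\item If $|\mathbb{F}|=4$ and $p,p_1,\dots,p_4$ are the five points of a line, no hyperplane avoids them, so your pure test need not exist.
\item For $n=3$, the subspace $H_1\cap H_2$ is a single point, possibly one of the $p_j$. So the ``generic point'' argument giving $H_1\cap H_2\subset H_3$ or $H_1\cap H_2\subset H_4$ is not available there.
\end{itemize}
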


If $|{\mathbb F}|\ge 4$, then any two $3$-adjacent anti-flags belong to a linear or dually linear  $4$-element  coclique of $\Gamma_1$
and Proposition \ref{prop-sim1} implies the following characterization of the relation $\sim_3$ in terms of the relation $\sim_1$. 

\begin{cor}\label{cor-sim1}
In the case when $|{\mathbb F}|\ge 4$, two distinct anti-flags are $3$-adjacent if and only if they belong to 
a $4$-element coclique of $\Gamma_1$ satisfying the property {\rm (L)}. 
\end{cor}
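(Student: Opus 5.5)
Corollary~\ref{cor-sim1} reduces to Proposition~\ref{prop-sim1} together with one existence claim, namely that any two $3$-adjacent anti-flags lie in a linear or dually linear $4$-element coclique of $\Gamma_1$. The proof therefore has two directions.

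For the "only if" direction, let $A_1,A_2$ be $3$-adjacent. By the duality induced by a semilinear isomorphism $V\to V^*$, we may assume $H_1=H_2=:H'$ and $p_1\neq p_2$. Set $\ell=\langle p_1,p_2\rangle$. Since $p_1\notin H'$, the line $\ell$ is not contained in $H'$ and meets $H'$ in exactly one point. Thus $\ell$ contains $|\mathbb F|+1-1=|\mathbb F|\ge 4$ points outside $H'$, two of which are $p_1,p_2$. Choose two further points $p_3,p_4\in\ell\setminus H'$. The four anti-flags $(p_k,H')$, $k=1,\dots,4$, share a hyperplane, so any two of them are $3$-adjacent. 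In particular they form a coclique of $\Gamma_1$, and this coclique is linear. By Lemma~\ref{lemma-1.1} (or equivalently the easy direction of Proposition~\ref{prop-sim1}) it satisfies property (L). This is the only place where $|\mathbb F|\ge 4$ is used, and it is used to get four points on $\ell$ off $H'$.

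For the "if" direction, suppose $A_1,A_2$ are distinct and lie in a $4$-element coclique $\mathcal C$ satisfying (L). By Proposition~\ref{prop-sim1}, $\mathcal C$ is linear or dually linear. As noted in the text, any two distinct anti-flags in such a coclique are $3$-adjacent, because they share a hyperplane or share a point. Hence $A_1\sim_3 A_2$.

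The real weight of the argument sits in Proposition~\ref{prop-sim1}, which we are allowed to assume here. So the only genuine check is the counting in the first direction, namely that $\ell\not\subset H'$ leaves at least four admissible points. This holds precisely because $|\mathbb F|\ge 4$.
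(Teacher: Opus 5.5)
Your proof is correct and follows the paper's own argument: you build a linear (or, by duality, dually linear) $4$-element coclique through the two $3$-adjacent anti-flags using the $|{\mathbb F}|\ge 4$ points of $\ell$ off $H'$ and Lemma~\ref{lemma-1.1}, and you get the converse from Proposition~\ref{prop-sim1}. One small correction: $|{\mathbb F}|\ge 4$ is not used only in the construction step, since the converse direction also needs it through Proposition~\ref{prop-sim1}, which assumes it.
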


Therefore, if $|{\mathbb F}|\ge 4$, then
$\sim_2$ and $\sim_4$ can be characterized in terms of $\sim_1$ by Proposition \ref{prop-sim3}.

To prove Proposition \ref{prop-sim1} we need the following.

\begin{lemma}\label{lemma-F4}
If $|{\mathbb F}|\ge 4$, then for any set formed by five points of ${\rm PG}(n-1,{\mathbb F})$
and any point from this set there is a hyperplane of ${\rm PG}(n-1,{\mathbb F})$ intersecting this set precisely in this point.
\end{lemma}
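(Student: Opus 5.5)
Let $X=\{x_1,\dots,x_5\}$ be the given set of points and $x=x_1$ the chosen one. The goal is a hyperplane $H$ with $x\in H$ and $x_2,\dots,x_5\notin H$. I would translate this into a statement about linear functionals on $V$. Fix nonzero vectors $v_1,\dots,v_5$ spanning $x_1,\dots,x_5$. Hyperplanes through $x$ correspond, up to scalar, to nonzero functionals in the annihilator $W=x^{0}\subset V^{*}$, which has dimension $n-1\ge 2$. For each $k\in\{2,\dots,5\}$, let $W_k=\{f\in W: f(v_k)=0\}$. Since $x_k\ne x$, the evaluation map $f\mapsto f(v_k)$ is a nonzero functional on $W$: there is a functional vanishing on $v_1$ but not on $v_k$. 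Hence each $W_k$ is a hyperplane of the vector space $W$. It therefore suffices to show that $W$ is not the union of the four proper subspaces $W_2,\dots,W_5$. Any $f\in W\setminus(W_2\cup\dots\cup W_5)$ then gives $H=\operatorname{Ker} f$, which is as required.

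The key step is the standard fact that a vector space over a field with at least $q$ elements is not a union of $q$ proper subspaces. I would prove the needed case directly by reducing to a $2$-dimensional subspace. First choose $a\in W\setminus W_2$. If $a$ avoids all the $W_k$, we are done. Otherwise, choose $b\in W$ with $b\notin W_k$ for those $k$ with $a\in W_k$; more simply, pick $b$ not in $W_k$ for any $k$ with $a\in W_k$. Such a $b$ exists by induction on the number of subspaces, starting with one proper subspace. The cleaner route is the counting argument below.

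Consider the points of the projective line $\langle a,b\rangle$ in ${\rm P}(W)$, namely $a+tb$ for $t\in\mathbb F$, together with $b$. A hyperplane $W_k$ either contains the whole plane $\langle a,b\rangle$, or meets it in exactly one projective point. So I want a $2$-dimensional subspace $U\subset W$ contained in none of $W_2,\dots,W_5$. Then $U\setminus\bigcup W_k$ is nonempty, because $U$ has $|\mathbb F|+1\ge 5$ projective points and at most $4$ of them are removed. To get $U$, pick a nonzero $c_k\in W\setminus W_k$ for each $k$ and argue that some plane avoids containment in every $W_k$.

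The cleanest version is a simple induction. The claim is: if a vector space $W$ is a union of $m$ proper subspaces $W_2,\dots,W_{m+1}$, then $|\mathbb F|<m$. Assume the union is irredundant and take $a\in W_2$ outside the others, and $b\notin W_2$. The $|\mathbb F|$ vectors $b+ta$ lie outside $W_2$, and each other $W_k$ contains at most one of them, since otherwise it would contain $a$. This forces $|\mathbb F|\le m-1$. With $m=4$ and $|\mathbb F|\ge 4$, the contradiction follows.

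The only mildly delicate point is checking that each $W_k$ is really a proper subspace of $W$, which uses $x_k\ne x$. If some of the points coincide the set has fewer points and the claim only gets easier, but I would assume the five points are distinct, as "set" suggests. This is also the step where the hypothesis on $|\mathbb F|$ enters: $|\mathbb F|\ge 4$ is exactly what excludes the covering by four hyperplanes.
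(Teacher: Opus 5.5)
Your proof is correct, but it takes a different route from the paper's. You dualize. Hyperplanes through the chosen point $x$ are the kernels of nonzero functionals in $W=x^{0}$. The four unwanted incidences cut out hyperplanes $W_2,\dots,W_5$ of $W$, which are proper because $x_k\ne x$. The lemma then becomes the classical fact that a vector space is not a union of $m$ proper subspaces when $|\mathbb F|\ge m$.

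Your last argument for that fact is complete. First discard redundant members of the cover, which only lowers $m$. Then take $a\in W_2$ outside the other $W_k$ and $b\notin W_2$. The $|\mathbb F|$ vectors $b+ta$ avoid $W_2$, and each other $W_k$ contains at most one of them, so $|\mathbb F|\le 3$ when $m=4$.

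The paper argues synthetically, by cases on the projective dimension of the span $S$ of the five points: line, plane, solid, or $4$-space. In the plane case it uses that there are $|\mathbb F|+1\ge 5$ lines through the chosen point. In the higher cases it projects the other four points from the chosen point into a hyperplane $S'$ of $S$ not containing it. It then finds a hyperplane of $S'$ missing the four projections, joins it with the chosen point, and extends the result to a hyperplane of the whole space.

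What each approach buys: yours is uniform, with no case split and no need to look at the span. It also gives the general version at once: for any $k+1$ points with $|\mathbb F|\ge k$, there is a hyperplane meeting them exactly in a prescribed one. The paper's approach stays in the incidence-geometric language used throughout that section.

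Two presentational points. Your second and third paragraphs are left unfinished and should be deleted, so the last argument stands alone. There you choose $b$ ``by induction'' and look for a $2$-dimensional $U$ contained in no $W_k$, but the existence of such a $b$ or $U$ is only asserted, never proved; it is itself a subspace-avoidance statement of the same kind as the lemma. Also, the final argument is direct rather than an induction.
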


\begin{proof}
Let $\{p_1,\dots,p_5\}$ be a set of points. Show that there is a hyperplane intersecting this set precisely in $p_5$.
Denote by $S$ the subspace of  ${\rm PG}(n-1,{\mathbb F})$ spanned by this set.
If $S$ is a line, then any hyperplane intersecting this line precisely in $p_5$ is as required. 
If $S$ is a plane, then it contains at least five lines passing through $p_5$ and at least one of them does not intersect
the set $\{p_1,p_2,p_3,p_4\}$; we take any hyperplane which intersects $S$ precisely in this line.

Suppose that $S$ is a solid. There is a plane $S'\subset S$ which does not contain $p_5$. 
For every $i\in \{1,2,3,4\}$ denote by $p'_i$ the  intersecting point of $S'$ and the line $\langle p_i, p_5\rangle$. 
As above, there is a line $\ell\subset S'$ which does not intersects the set $\{p'_1,p'_2,p'_3,p'_4\}$. 
We take any hyperplane intersecting  $S$ precisely in the plane spanned by $\ell$ and $p_5$.

Similarly, if $S$ is $4$-dimensional, then we choose a solid $S'\subset S$ which does not contain $p_5$
and for every $i\in \{1,2,3,4\}$ denote by $p'_i$ the  intersecting point of $S'$ and the line $\langle p_i, p_5\rangle$.
As in the previous case, we establish the existence of  a plane $S''\subset S'$ which does not intersects the set $\{p'_1,p'_2,p'_3,p'_4\}$. 
Any hyperplane intersecting  $S$ precisely in the solid spanned by $S''$ and $p_5$ is as required.
\end{proof}

\begin{proof}[Proof of Proposition \ref{prop-sim1}]
By Lemma \ref{lemma-1.1}, the property (L) holds for every linear or dually linear coclique of $\Gamma_1$.
Let ${\mathcal C} = \{(p_j,H_j)\}_{j=1}^4$ be a $4$-element  coclique of $\Gamma_1$ satisfying (L).
We need to show that it is linear or dually linear.

Suppose  that the points $p_1, p_2, p_3, p_4$ as well as the hyperplanes $H_1, H_2, H_3, H_4$ are mutually distinct.
We consider $H_1\cap H_2, H_1\cap H_3, H_1\cap H_4$ as hyperplanes of $H_1$
and choose a point 
$$
p\in H_1\setminus(H_2\cup H_3\cup H_4).
$$
It is possible that $p\in \{p_2,p_3,p_4\}$, but we can assume that $p\ne p_2$ without loss of generality. 
Lemma \ref{lemma-F4} implies the existence of  a hyperplane $H$ such that 
$$H\cap\{p_1, p_2, p_3, p_4,p\} = \{p_2\}.$$
The anti-flag $(p,H)$ is $1$-adjacent to $(p_i, H_i)$ if and only if $i\in \{1,2\}$.  
Then $(p,H)\not\in {\mathcal C}$ (since ${\mathcal C}$ is a coclique of $\Gamma_1$) and (L) fails, a contradiction. 

So, there are distinct $i,j\in\{1,2,3,4\}$ such that $p_i=p_j$ or $H_i=H_j$.
We can assume that $|\{H_1,H_2,H_3,H_4\}|\le 3$ by duality.

Consider the case when  $|\{H_1,H_2,H_3,H_4\}|=3$, say $H_1 = H_2 =:H'$ and $H', H_3, H_4$ are mutually distinct.
By Lemma \ref{lemma-F4}, there is a hyperplane $H$ such that 
$$H\cap\{p_1, p_2, p_3, p_4,p'\} = \{p'\},$$
where $p'$ is a point outside of $H'$.
Then $H'\ne H$. As above, we choose a point 
$$p\in H'\setminus (H_3\cup H_4\cup H).$$
The anti-flag $(p,H)$ is $1$-adjacent to $(p_i,H_i)$ if and only if $i\in \{1,2\}$ which contradicts (L).
Therefore,  $|\{H_1,H_2,H_3,H_4\}|\le 2$.

Suppose now that $|\{H_1,H_2,H_3,H_4\}|=2$, say $H_1 = H_2 = H_3 =:H'$ and $H_4\neq H'$. 
Lemma \ref{lemma-F4} implies the existence of  a hyperplane $H$ such that 
$$H\cap \{p_1, p_2, p_3, p_4,p'\} = \{p_3\},$$
where $p'\in H'$. Then $H'\ne H$ and there is a point 
$$p\in H'\setminus(H_4\cup H).$$ 
The anti-flag $(p,H)$ is $1$-adjacent to $(p_i,H_i)$ if and only if $i\in \{1,2\}$
which contradicts (L) again.

We obtain that $H_1 = H_2 = H_3 = H_4 =:H'$. If no line contains $\{p_1, p_2, p_3, p_4\}$, 
then for some distinct $i,j\in \{1,2,3,4\}$ the line $\langle p_i, p_j\rangle$ does not contain the remaining two $p_t$
and it is easy to see that there is a hyperplane $H$ intersecting $\{p_1, p_2, p_3, p_4\}$ precisely in two points.
Let $$H\cap\{p_1, p_2, p_3, p_4\} =\{p_3, p_4\}.$$
Since $H'\ne H$, there is $p\in H'\setminus H$.
The anti-flag $(p,H)$ is $1$-adjacent to $(p_i, H')$ if and only if $i\in \{1,2\}$  in contrast to our assumption. 
Therefore, $\{p_1, p_2, p_3, p_4\}$ is contained in a line and the coclique ${\mathcal C}$ is linear.
\end{proof}

\subsection{Recovering from $\sim_1$. The case when ${\mathbb F}$ is finite}
The case of $|\mathbb{F}| = 3$ remains to be considered. However, the arguments to be used to fix this case work as well for any finite field. So, throughout this subsection we assume that $\mathbb{F}$ is finite, $|\mathbb{F}| = 2$ being allowed. 
Thus, we shall be able to show why when $|\mathbb{F}| = 2$ the relations $\sim_i$ with $i > 1$ cannot be recovered from $\sim_1$.

\begin{prop}\label{prop-finite1}
Suppose that $|{\mathbb F}|=q$.
Let $A_i=(p_i, H_i)$, $i\in \{1,2\}$ be distinct anti-flags such that $A_1\not\sim_1 A_2$.
Then the following assertions are fulfilled:
\begin{enumerate}
\item[{\rm (1)}] If $A_1\sim_2 A_2$, then $|\{A_1,A_2\}^{\sim_1}| = 4(q^{n-2}-1)(q-1)q^{n-3}$.
\item[{\rm (2)}] If $A_1\sim_3 A_2$, then $|\{A_1,A_2\}^{\sim_1}| =q^{2n-3} - 2q^{n-2}$.
\item[{\rm(3)}] If $A_1\sim_4 A_2$ and the line joining  $p_1$ and $p_2$ intersects $H_1\cap H_2$ in a point, then 
$|\{A_1,A_2\}^{\sim_1}|= 4q^{n-2}(q^{n-2}-q^{n-3})-2q^{n-2}$.
\item[{\rm (4)}] If $A_1\sim_4 A_2$  and the line joining  $p_1$ and $p_2$ does not intersect $H_1\cap H_2$, then 
$|\{A_1,A_2\}^{\sim_1}|= 4(q^{n-2}-1)(q-1)q^{n-3} + 2q^{n-2}$.
\end{enumerate} 
\end{prop}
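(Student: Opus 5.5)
The plan is a direct count. First I translate $1$-adjacency into a local incidence condition. For an anti-flag $A=(p,H)$ and $A_j=(p_j,H_j)$, the definition of $\sim_1$ says $A\sim_1 A_j$ if and only if exactly one of the statements ``$p\in H_j$'' and ``$p_j\in H$'' holds. These conditions already force $A\ne A_j$. So $\{A_1,A_2\}^{\sim_1}$ is the set of pairs $(p,H)$ with $p\notin H$ such that, for each $j\in\{1,2\}$,
\[
p_j\in H \iff p\notin H_j .
\]
I will therefore sort the points $p$ by the pattern $(a_1,a_2)$, where $a_j=1$ if $p\in H_j$ and $a_j=0$ otherwise. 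For each such $p$ I count the hyperplanes $H$ that avoid $p$, contain those $p_j$ with $a_j=0$, and avoid those $p_j$ with $a_j=1$. The answer is then
\[
\sum_{p}\#\{H\ \text{admissible for } p\}.
\]

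The second step is to record the hyperplane counts I need in a small table, obtained by inclusion--exclusion over subspaces. Write $N_k=(q^{n-k}-1)/(q-1)$ for the number of hyperplanes through a fixed $(k-1)$-dimensional subspace.
\begin{itemize}
\item Hyperplanes through a point $x$ and avoiding two points $y,z$: there are $N_1-2N_2+N_3=(q-1)q^{n-3}$ of them if $x,y,z$ are non-collinear, and $N_1-N_2=q^{n-2}$ if they are collinear.
\item Hyperplanes through $x,y$ and avoiding $z$: there are $N_2-N_3=q^{n-3}$ of them in the non-collinear case, and $0$ in the collinear case.
\item Hyperplanes avoiding three points: by the dual of the computation in Lemma \ref{lemma-3hyp}, there are $(q-1)^2q^{n-3}$ of them in the non-collinear case and $(q-2)q^{n-2}$ in the collinear case.
\end{itemize}
Degenerate coincidences ($p=p_j$, or $p_1=p_2$ in case (2)) are handled separately and are immediate.

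The third step is to count the points in each class. For a pattern $(a_1,a_2)$, I count the points $p$ with that pattern, splitting according to whether $p$ lies on the line $\ell=\langle p_1,p_2\rangle$. The points of $\ell$ are few and explicit: $p_1$, $p_2$, the points $\ell\cap H_1$ and $\ell\cap H_2$, and the remaining ones. Their positions are exactly what distinguishes the four cases of the statement.
\begin{itemize}
\item In case (1), $\ell\cap H_1=p_2$ and $\ell\cap H_2=p_1$.
\item In case (3), $\ell$ meets $H_1\cap H_2$ in a single point.
\item In case (4), $\ell\cap H_1$ and $\ell\cap H_2$ are distinct points different from $p_1,p_2$.
\item In case (2), I take $H_1=H_2$ by duality; the hyperplane-sharing case should give the same number.
\end{itemize}
Off $\ell$, the counts come from $|H_1\setminus H_2|=q^{n-2}$, $|H_1\cap H_2|=N_2$ and the complement size $q^{n-1}-q^{n-2}$, after removing the few points of $\ell$.

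Finally I multiply and sum, then simplify to the stated closed forms. As a consistency check, case (4) should exceed case (1) by exactly the contribution $2q^{n-2}$ of points of $\ell$ in the transversal position. The main obstacle is the bookkeeping of points on $\ell$ and of the collinear configurations, where the hyperplane counts jump. I will handle this by listing the points of $\ell$ individually in each case before summing over the generic points.
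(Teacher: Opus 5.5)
Your plan is correct and is essentially the paper's proof. The paper also splits $\{A_1,A_2\}^{\sim_1}$ according to which of $H_1,H_2$ contain $p$, counts the admissible $H$ as points of the dual space via the dual of Lemma \ref{lemma-3hyp}, and treats the special points of $\langle p_1,p_2\rangle$ separately, which gives exactly your observation that case (4) exceeds case (1) by $2q^{n-2}$. The only difference is cosmetic: in case (2) the paper dualizes to $p_1=p_2$ rather than $H_1=H_2$, which avoids the collinear point $\langle p_1,p_2\rangle\cap H_1$, but your version also yields $q^{2n-3}-2q^{n-2}$.
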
 

For $q=2$ the fourth possibility of Proposition \ref{prop-finite1} cannot occur and  the numbers (1)-(3) are mutually equal.
If $q\ge 3$, then all above numbers are mutually different
which characterizes each of the relations $\sim_i$, $i\in \{2,3,4\}$ in terms of $\sim_1$.

\begin{proof}[Proof of Proposition \ref{prop-finite1}]
The complement of a hyperplane of ${\rm PG}(n-1,q)$ consists of $q^{n-1}$ points and there are precisely $(q-1)q^{n-2}$ points outside of 
the union of two distinct hyperplanes of ${\rm PG}(n-1,q)$. 
Let $H'_1,H'_2,H'_3$ be mutually distinct hyperplanes of ${\rm PG}(n-1,q)$. 
If $H'_1\cap H'_2\not\subset H'_3$, then there are precisely $(q-1)^2 q^{n-3}$ points outside of $H'_1\cup H'_2\cup H'_3$. 
In the case when $H'_1\cap H'_2\subset H'_3$, the number of such points is $(q-2)q^{n-2}$.

(1). 
Suppose that $A_1\sim_2 A_2$. Then $p_i\in H_{3-i}$ for $i\in \{1,2\}$.
If an anti-flag $(p,H)$ belongs to $\{A_1,A_2\}^{\sim_1}$, then one of the following possibilities is realized:
\begin{enumerate}
\item[$\bullet$] $p\in H_1\cap H_2$ and $H$ intersects $\langle p_1, p_2\rangle$ in a point distinct from $p_1$ and $p_2$;
\item[$\bullet$] $p\not\in H_1\cup H_2\cup \langle p_1, p_2\rangle$ and $\langle p_1, p_2\rangle \subset H$;
\item[$\bullet$] for one of $i\in \{1,2\}$ we have $p\in H_i\setminus (H_{3-i}\cup \{p_{3-i}\})$, $p_i\not\in H$ and $p_{3-i}\in H$.
\end{enumerate}
There are precisely 
$$\frac{q^{n-2}-1}{q-1}\cdot (q-1)^2q^{n-3}=(q^{n-2}-1)(q-1)q^{n-3}$$
anti-flags satisfying the first condition (we consider $H$ as a point of the dual projective space which is not  in 
the union of the hyperplanes corresponding to $p,p_1,p_2$).
The second condition  holds for precisely
$$((q-1)q^{n-2} -(q-1))\cdot q^{n-3}=(q^{n-2}-1)(q-1)q^{n-3}$$
anti-flags and for each $i\in \{1,2\}$ there are precisely 
$$(q^{n-2}-1) \cdot (q-1)q^{n-3}$$
anti-flags satisfying the third condition. 
We get the claim.

(2). 
Suppose now that $A_1\sim_3 A_2$.
By duality, we assume that $p_1=p_2$. 
Then for every anti-flag $(p,H)$ belonging to $\{A_1,A_2\}^{\sim_1}$ one of the following possibilities is realized:
\begin{enumerate}
\item[$\bullet$] $p\in H_1\cap H_2$ and $p_1=p_2\not\in H$;
\item[$\bullet$] $p\not\in H_1\cup H_2\cup \{p_1=p_2\}$ and $p_1=p_2\in H$.
\end{enumerate}
There are precisely 
$$\frac{q^{n-2}-1}{q-1}\cdot (q-1)q^{n-2}=(q^{n-2}-1)q^{n-2}
\;\mbox{ and }\;
((q-1)q^{n-2}-1)\cdot q^{n-2}$$
anti-flags satisfying the first and second condition, respectively.
The sum of these numbers is $q^{2n-3} - 2q^{n-2}$.

(3) and (4). If $A_1\sim_4 A_2$, then  $\{p_1,p_2\}\cap (H_1\cup H_2)=\emptyset$
and for every anti-flag $(p,H)$ belonging to $\{A_1,A_2\}^{\sim_1}$ one of the following possibilities is realized:
\begin{enumerate}
\item[$\bullet$] $p\in H_1\cap H_2$ and $H$ intersects $\langle p_1, p_2\rangle$ in a point distinct from $p_1$ and $p_2$;
\item[$\bullet$] $p\not\in H_1\cup H_2\cup \langle p_1, p_2\rangle$ and $\langle p_1, p_2\rangle \subset H$;
\item[$\bullet$] for one of $i\in \{1,2\}$ we have $p\in H_i\setminus H_{3-i}$, $p_i\not\in H$ and $p_{3-i}\in H$.
\end{enumerate}
Suppose first that $\langle p_1, p_2 \rangle$ intersects $H_1\cap H_2$ in a point $p'$.
There are precisely 
$$\left (\frac{q^{n-2}-1}{q-1}-1\right )\cdot (q-1)^2q^{n-3}$$
anti-flags $(p,H)$ with $p\ne p'$  and $(q-2)q^{n-2}$ anti-flags $(p',H)$
which satisfy the first condition; the sum of these numbers is 
$$(q^{n-2}-1)(q-1)q^{n-3}-q^{n-3}.$$
The second condition  holds for precisely
$$((q-1)q^{n-2}-q) \cdot q^{n-3}$$
anti-flags and for each $i\in \{1,2\}$ there are precisely 
$$q^{n-3}\cdot (q-1)q^{n-2}$$
anti-flags satisfying the third condition. 
A direct calculation shows that the sum 
$$(q^{n-2}-1)(q-1)q^{n-3}-q^{n-3}+((q-1)q^{n-2}-q)q^{n-3}+2(q-1)q^{2n-5}$$
is as required.

Suppose now that $\langle p_1, p_2 \rangle$ intersects $H_1$ and $H_2$ in distinct points $p'_1$ and $p'_2$, respectively. 
As in the case when $A_1\sim_2 A_2$, the number of anti-flags satisfying the first or second condition is $$(q^{n-2}-1)(q-1)q^{n-3}.$$
Also, for each $i\in \{1,2\}$ we have the same number of  anti-flags $(p,H)$ with $p\ne p'_i$ satisfying the third condition. 
Since the assumption that $p_{3-i}\in H$ and $p_i\not\in H$ implies that $p'_i\not \in H$, 
the number of anti-flags $(p'_i,H)$ satisfying the third condition is $q^{n-2}$. 
We obtain the required equality.
\end{proof}

\section{Non-singular points of the hyperbolic form over the field of two elements as anti-flags}
As above, we suppose that $V={\mathbb F}^n$ and $n\ge 3$.
If $x\in V$ and $x^*\in V^*$, then $x^{*}x$ denotes the value of the functional $x^*$ on the vector $x$.
Recall that for non-zero $x^*\in V^*$ the annihilator $(x^*)^0$  is the $(n-1)$-dimensional subspace of $V$ formed by all $x\in V$ satisfying $x^*x=0$.

Consider the non-degenerate quadratic form $Q$ on the $2n$-dimensional vector space $V\times V^*$ defined as follows:
$$Q({\bf x})=x^*x\;\mbox{ for }\; {\bf x}=(x,x^*)\in V\times V^*.$$
It is clear that 
$$\Pi=\{(x,0):x\in V\}\;\mbox{ and }\;\Sigma=\{(0,x^*):x^*\in V^*\}$$
are maximal singular subspaces of $Q$.
Also, $(x,x^*)\not\in \Pi\cup \Sigma$  is a non-singular vector of $Q$ if and only if $\langle x \rangle$ and $(x^*)^0$   form an anti-flag.

Singular points of ${\rm PG}(2n-1, {\mathbb F})$, i.e. corresponding to singular $1$-dimensional subspaces of $Q$,
form the hyperbolic polar space ${\mathcal O}^{+}(2n, {\mathbb F})$ (we write  ${\mathcal O}^{+}(2n, q)$ in the case when $|{\mathbb F}|=q$).
The remaining points of ${\rm PG}(2n-1, {\mathbb F})$ are said to be {\it non-singular}.

Let $f$ be the map of the set of non-singular points of ${\rm PG}(2n-1, {\mathbb F})$ to the set of anti-flags ${\mathcal A}$
sending every non-singular point $\langle (x,x^*)\rangle$ to the anti-flag formed by $\langle x \rangle$ and $(x^*)^0$.
This map is surjective, but it is not injective if $|{\mathbb F}|\ge 3$.
Indeed, if $|{\mathbb F}|\ge 3$, ${\bf p}=\langle (x,x^*)\rangle$ is a non-singular point and $a\in {\mathbb F}\setminus \{0,1\}$,
then ${\bf p}'=\langle (x,ax^*)\rangle$ is a non-singular point distinct from ${\bf p}$ and $f({\bf p})=f({\bf p}')$.

So, $f$ is bijective if and only if  $|{\mathbb F}|=2$. 

From now on we assume that $|\mathbb{F}| = 2$. The next proposition is the main result of \cite{IP}. The setting we have introduced above allows us to give a very short proof of that result.

\begin{prop}\label{prop-F2-2}
Suppose that $|{\mathbb F}|=2$. For distinct non-singular points ${\bf p}_1, {\bf p}_2$ the third point on the line joining them 
is non-singular if and only if the anti-flags $f({\bf p}_1),f({\bf p}_2)$ are $1$-adjacent.
\end{prop}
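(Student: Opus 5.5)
Over $\mathbb F_2$ the third point on the line through ${\bf p}_1=\langle {\bf x}_1\rangle$ and ${\bf p}_2=\langle {\bf x}_2\rangle$ is $\langle {\bf x}_1+{\bf x}_2\rangle$. The plan is to compute $Q({\bf x}_1+{\bf x}_2)$ through the polar bilinear form, and then translate the result into incidence conditions between the points and hyperplanes of the two anti-flags.

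Write ${\bf x}_j=(x_j,x_j^*)$ for $j\in\{1,2\}$. Since ${\bf p}_j$ is non-singular, $Q({\bf x}_j)=x_j^*x_j=1$; this is the only nonzero value in $\mathbb F_2$. Expanding the quadratic form gives
$$Q({\bf x}_1+{\bf x}_2)=(x_1^*+x_2^*)(x_1+x_2)=x_1^*x_1+x_2^*x_2+x_1^*x_2+x_2^*x_1=x_1^*x_2+x_2^*x_1,$$
because $1+1=0$. So the third point is non-singular exactly when $x_1^*x_2+x_2^*x_1=1$, that is, when exactly one of the two values $x_1^*x_2$, $x_2^*x_1$ equals $1$.

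Next translate these values into incidences. With $f({\bf p}_j)=(p_j,H_j)$, where $p_j=\langle x_j\rangle$ and $H_j=(x_j^*)^0$, we have $x_{3-j}^*x_j=0$ if and only if $p_j\in H_{3-j}$. The condition "exactly one of $x_1^*x_2$, $x_2^*x_1$ is $1$" therefore says that $p_j\in H_{3-j}$ and $p_{3-j}\notin H_j$ for a unique $j$. The points ${\bf p}_1,{\bf p}_2$ are distinct and $f$ is a bijection, so $f({\bf p}_1)\neq f({\bf p}_2)$. Hence this condition is precisely the definition of $A_1\sim_1 A_2$.

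Finally, check that the cases $p_1=p_2$ or $H_1=H_2$ cause no trouble. If $p_1=p_2$, then $x_1=x_2$, so $x_1^*x_2=x_1^*x_1=1$ and $x_2^*x_1=x_2^*x_2=1$. The sum is $0$, the third point is singular, and indeed the anti-flags are $3$-adjacent rather than $1$-adjacent. The case $H_1=H_2$ is dual and gives the same conclusion. Thus the equivalence holds for all distinct non-singular points.

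There is no real obstacle in this argument. The only points needing care are the polarization identity in characteristic two and the verification that the degenerate $\sim_3$ cases match on both sides of the equivalence.
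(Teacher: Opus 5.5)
Your proof is correct and is essentially the paper's argument. Like the paper, you expand $Q({\bf x}_1+{\bf x}_2)=x_1^*x_2+x_2^*x_1$ over $\mathbb F_2$ and observe that exactly one of the two terms being nonzero is the definition of $\sim_1$. Your separate check of the cases $p_1=p_2$ or $H_1=H_2$ is harmless but redundant, since $p_j\in H_{3-j}$ together with $p_{3-j}\notin H_j$ already forces $p_1\neq p_2$ and $H_1\neq H_2$.
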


\begin{proof}
Let ${\bf p}_i=\langle (x_i, x^*_i) \rangle$, $i\in \{1,2\}$ be distinct non-singular points. 
Then $x^*_ix_i=1$ for every $i$. The third point on the line $\langle {\bf p}_1,{\bf p}_2\rangle$ is 
$\langle (x_1+x_2,x^*_1+x^*_2)\rangle$
and
$$(x^*_1+x^*_2)(x_1+x_2)=x^*_1x_1+x^*_1x_2+x^*_2x_1+x^*_2x_2=x^*_1x_2+x^*_2x_1$$
is non-zero if and only if precisely one of $x^*_1x_2, x^*_2x_1$ is non-zero.
This gives the claim.
\end{proof}

Using the map $f$, we identify non-singular points of the form $Q$ with anti-flags of ${\rm PG}(n-1,2)$.
By Proposition \ref{prop-F2-2}, $\Gamma_1$ is the collinearity graph of the point-line geometry of non-singular points and totally non-singular lines 
(lines formed by non-singular points).  

\begin{theorem}\label{th2}
The polar space ${\mathcal O}^{+}(2n, 2)$ can be recovered from the graph $\Gamma_1$. 
\end{theorem}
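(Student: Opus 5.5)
The plan is to use that, over ${\mathbb F}_2$, $\Gamma_1$ is exactly the \emph{non-orthogonality graph} on the non-singular points of $Q$. From this graph we reconstruct the linear structure of $V\times V^*$ far enough to identify the singular points and the singular lines, that is, the polar space.

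\textbf{Step 1: adjacency is non-orthogonality.} Let $B$ be the bilinear form polar to $Q$, so that $Q({\bf x}+{\bf y})=Q({\bf x})+Q({\bf y})+B({\bf x},{\bf y})$. For non-singular ${\bf p}_1,{\bf p}_2$ we have $Q({\bf p}_1+{\bf p}_2)=B({\bf p}_1,{\bf p}_2)$ in characteristic two. By Proposition \ref{prop-F2-2}, ${\bf p}_1,{\bf p}_2$ are adjacent in $\Gamma_1$ if and only if $B({\bf p}_1,{\bf p}_2)=1$.

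For any point ${\bf v}$ of ${\rm PG}(2n-1,2)$ put
$$F({\bf v})=\{{\bf x}\ \mbox{non-singular}: B({\bf x},{\bf v})=1\}.$$
For non-singular ${\bf v}$ this is precisely the neighbourhood of ${\bf v}$ in $\Gamma_1$, so it is visible in the graph. By linearity of $B$, $F({\bf u}+{\bf v})=F({\bf u})\,\triangle\,F({\bf v})$.

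\textbf{Step 2: $F$ is injective.} The non-singular vectors span $V\times V^*$; for instance every singular vector is a sum of two non-singular ones (see Step 3). Since $B$ is non-degenerate, $F$ is therefore injective on all points. In particular, distinct points give distinct subsets of the vertex set.

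\textbf{Step 3: recover the singular points.} Take two non-adjacent vertices ${\bf p}_1\ne{\bf p}_2$. Then ${\bf p}_1+{\bf p}_2$ is singular, and
$$F({\bf p}_1+{\bf p}_2)=N({\bf p}_1)\triangle N({\bf p}_2)$$
is computable from $\Gamma_1$.

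Conversely, every singular point ${\bf s}$ arises this way. The hyperplane ${\bf s}^\perp$ is not totally singular, since $n\ge 3$ and maximal singular subspaces have dimension $n<2n-1$. So we can pick a non-singular ${\bf p}\in{\bf s}^\perp$. Then ${\bf p}+{\bf s}$ is non-singular and orthogonal to ${\bf p}$, hence non-adjacent to ${\bf p}$, and ${\bf s}={\bf p}+({\bf p}+{\bf s})$.

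We therefore define the set of singular points combinatorially as
$$\{\,N({\bf p}_1)\triangle N({\bf p}_2) : {\bf p}_1\ne{\bf p}_2 \mbox{ non-adjacent}\,\}.$$
By Step 2, this family is in bijection with the point set of ${\mathcal O}^{+}(2n,2)$.

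\textbf{Step 4: recover the singular lines.} Two distinct singular points ${\bf s},{\bf t}$ span a singular line if and only if ${\bf s}+{\bf t}$ is singular. This happens exactly when $F({\bf s})\triangle F({\bf t})$ lies in the family from Step 3, rather than being the neighbourhood of a vertex (which occurs when ${\bf s}+{\bf t}$ is non-singular). The line is then the triple
$$\{F({\bf s}),\ F({\bf t}),\ F({\bf s})\triangle F({\bf t})\}.$$
Thus the point-line geometry ${\mathcal O}^{+}(2n,2)$ is recovered from $\Gamma_1$. Consequently every automorphism of $\Gamma_1$ induces an automorphism of the polar space, which by the standard theory lies in ${\rm O}^{+}(2n,2)$.

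\textbf{Main obstacle.} The delicate point is Step 2, the injectivity and spanning argument. Its purpose is to make sure that sets obtained from different pairs really coincide exactly when the singular sums coincide. It also needs to show that neighbourhoods of vertices never coincide with the sets $F({\bf s})$ for singular ${\bf s}$. Both claims reduce to non-degeneracy of $B$, together with the fact that non-singular points span the whole space. I would verify the spanning claim explicitly using the vectors $(e_i,e_i^*)$ and $(e_i+e_j,e_i^*)$.
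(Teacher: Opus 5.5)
Your proof is correct, and it takes a genuinely different route from the paper's. The paper argues synthetically with small configurations of $\Gamma_1$. It characterizes totally non-singular lines as the $3$-cliques satisfying (NS) (Proposition~\ref{prop2-1}) and identifies a singular point with a parallel class of $2$-element cocliques. It recognizes parallelism of the first type through pairs of totally non-singular lines (Lemma~\ref{lemma2-1}), and of the second type through an auxiliary coclique (Lemma~\ref{lemma2-2}, which needs a reduction to $n=3$ and a careful case argument). Finally it detects singular lines by a triangle condition on cocliques. You instead use the $\mathbb{F}_2$-linear structure globally. The map ${\bf v}\mapsto F({\bf v})$ is an injective homomorphism from $(V\times V^*,+)$ into $(2^{\mathcal A},\triangle)$, and its image is the span of the neighbourhoods (equivalently, the adjacency matrix of $\Gamma_1$ has $2$-rank $2n$). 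The neighbourhoods are exactly the images of the non-singular vectors. Your Steps 3 and 4 then just read off the singular vectors and the criterion $Q({\bf s}+{\bf t})=B({\bf s},{\bf t})=0$. This gives a much shorter proof with no case analysis, and it recovers more than the point-line geometry: it recovers the vector space together with $Q$. So an automorphism $\phi$ of $\Gamma_1$ is seen directly to be induced by the linear map $g$ with $F(g{\bf v})=\phi(F({\bf v}))$; this $g$ preserves $Q$ and agrees with $\phi$ on non-singular points. That yields Corollary~\ref{cor2} without the fundamental theorem for polar spaces, and it makes explicit the compatibility on non-singular points that the paper leaves implicit. The paper's approach, for its part, only ever inspects bounded-size configurations (cliques and cocliques of size at most three and how they meet) rather than entire neighbourhoods. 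It matches the clique/coclique characterizations used for Theorem~\ref{th1}(1), and it produces extra geometric information, e.g.\ that totally non-singular lines are maximal cliques of $\Gamma_1$ characterized by (NS).
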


The automorphism group of the polar space ${\mathcal O}^{+}(2n, 2)$ is the orthogonal group ${\rm O}^{+}(2n,2)$. 
Every element of this group induces an automorphism of  the graph $\Gamma_1$. 
In particular, elements of ${\rm O}^{+}(2n,2)$ preserving each of the singular subspaces $\Pi$ and $\Sigma$ correspond to linear automorphisms of $V$, 
elements transposing these subspaces are related to linear isomorphisms of $V$ to $V^{*}$.

Conversely, Theorem \ref{th2} shows that every automorphism of $\Gamma_1$ induces an automorphism of the polar space ${\mathcal O}^{+}(2n, 2)$
which implies the following. 

\begin{cor}\label{cor2}
If $|{\mathbb F}|=2$, then the automorphism group of $\Gamma_1$ is ${\rm O}^{+}(2n,2)$.
\end{cor}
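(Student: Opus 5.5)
The plan is to deduce the statement from Theorem \ref{th2} together with Proposition \ref{prop-F2-2}, by showing that the natural homomorphism ${\rm O}^{+}(2n,2)\to {\rm Aut}(\Gamma_1)$ is a bijection. Throughout I identify the vertices of $\Gamma_1$ with the non-singular points of $Q$ via the bijection $f$.

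\emph{Well-definedness and injectivity.} An element $g\in {\rm O}^{+}(2n,2)$ is a linear map of $V\times V^*$ preserving $Q$. Hence it permutes the non-singular points. It also preserves the property that the third point of a line is non-singular. By Proposition \ref{prop-F2-2}, $g$ therefore induces an automorphism of $\Gamma_1$. If this automorphism is trivial, then $g$ fixes every non-singular point. The non-singular points span $V\times V^*$, since for instance they include $\langle(e_i,e_i^*)\rangle$ and $\langle(e_i,e_i^*+e_j^*)\rangle$. Over ${\mathbb F}_2$, fixing a point means fixing the vector, so $g$ is the identity. Thus ${\rm O}^{+}(2n,2)$ embeds in ${\rm Aut}(\Gamma_1)$.

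\emph{Surjectivity.} Let $\varphi$ be an automorphism of $\Gamma_1$. By Theorem \ref{th2}, the polar space ${\mathcal O}^{+}(2n,2)$ is reconstructed from $\Gamma_1$, so $\varphi$ induces an automorphism $\hat\varphi$ of this polar space. Since $n\ge 3$, the polar space has rank at least $3$. By the fundamental theorem for polar spaces (Tits), $\hat\varphi$ is induced by a semilinear map preserving $Q$ up to a scalar. Over ${\mathbb F}_2$ this is an element $g\in{\rm O}^{+}(2n,2)$. It remains to show that $g$ and $\varphi$ agree on the non-singular points. Then $\varphi g^{-1}$ is the identity, and the proof is complete.

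\emph{The main obstacle} is this compatibility, and it depends on how Theorem \ref{th2} realises the reconstruction. Let $B$ be the bilinear form associated with $Q$. In characteristic $2$ with $2n$ even, $B$ is non-degenerate. Hence a non-singular point ${\bf p}$ is uniquely determined by its tangent hyperplane ${\bf p}^{\perp}$, and so by the set of singular points in ${\bf p}^{\perp}$. I expect the reconstruction in Theorem \ref{th2} to attach to each vertex ${\bf p}$ exactly this set, or some set determining it, and to do so in a way that is preserved by automorphisms of $\Gamma_1$.

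If so, $\hat\varphi$ maps the singular points of ${\bf p}^{\perp}$ to those of $\varphi({\bf p})^{\perp}$. The singular points of ${\bf p}^{\perp}$ span ${\bf p}^{\perp}$, because it is a non-degenerate parabolic section of rank $n-1\ge 2$. Therefore
$$g({\bf p})^{\perp}=g({\bf p}^{\perp})=\varphi({\bf p})^{\perp},$$
and hence $g({\bf p})=\varphi({\bf p})$.

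If the reconstruction instead identifies singular points with certain configurations of vertices (for example maximal cocliques or totally non-singular substructures), I would verify the same incidence directly: ${\bf p}$ is collinear with a singular point ${\bf s}$ via $\perp$ exactly when the configuration for ${\bf s}$ relates to ${\bf p}$ in a $\Gamma_1$-definable way. This is the step I would check most carefully against the construction given in the proof of Theorem \ref{th2}.
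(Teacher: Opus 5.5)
Your proposal is correct and follows the paper's route. The paper notes that ${\rm O}^{+}(2n,2)$ acts on $\Gamma_1$, that by Theorem \ref{th2} every automorphism of $\Gamma_1$ induces an automorphism of the polar space, and that the polar space has automorphism group ${\rm O}^{+}(2n,2)$; your injectivity and compatibility checks make explicit what the paper leaves implicit. The compatibility step you flag does go through with the paper's reconstruction: there a singular point ${\bf s}$ corresponds to the parallel class of $2$-element cocliques whose lines pass through ${\bf s}$, and a vertex ${\bf p}$ lies in a coclique of this class if and only if the third point of $\langle {\bf s},{\bf p}\rangle$ is non-singular, i.e.\ if and only if ${\bf s}\perp{\bf p}$, which is exactly the $\Gamma_1$-definable incidence your tangent-hyperplane argument needs.
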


Let $i\in \{2,3,4\}$. By the first part of Theorem \ref{th1}, the relation $\sim_1$ can be recovered from $\sim_i$. 
Therefore, if $\sim_i$ can be recovered from $\sim_1$, 
then the graphs $\Gamma_1$ and $\Gamma_i$ have the same automorphism group which contradicts
Corollaries \ref{cor1} and \ref{cor2}.
We obtain the second part of  Theorem \ref{th1}.

\section{Proof of Theorem \ref{th2}}
Consider the non-degenerate bilinear form $F$ on $V\times V^{*}$  defined as follows:
$$F({\bf x}, {\bf y})=y^*x+ x^*y\;\mbox{ for }\; {\bf x}=(x,x^*), {\bf y}=(y,y^*)\in V\times V^*.$$
The associated orthogonal relation on ${\rm PG}(2n-1,2)$ will be denoted by $\perp$. 
We use the following simple observations:
\begin{enumerate}
\item[$\bullet$] Two distinct singular points are orthogonal if and only if the line joining them is singular.
\item[$\bullet$] Two distinct non-singular points are non-orthogonal if and only if the line joining them is totally non-singular. 
\item[$\bullet$] A singular point ${\bf p}$ and a non-singular point ${\bf q}$ are orthogonal if and only if 
the third point on the line $\langle {\bf p}, {\bf q} \rangle$ is non-singular.
\end{enumerate}

Recall that non-singular points of the form $Q$ are identified with anti-flags of ${\rm PG}(n-1,2)$
and $\Gamma_1$ is the collinearity graph of the point-line geometry of non-singular points and totally non-singular lines.
So, totally non-singular lines are cliques of the graph $\Gamma_1$.
Our first step is the following characterization of such lines in therms of the relation $\sim_1$.

\begin{prop}\label{prop2-1}
Let ${\mathcal C}$ be a $3$-element clique of $\Gamma_1$.
Then ${\mathcal C}$  is a  line if and only if the following condition is satisfied:
\begin{enumerate}
\item[{\rm (NS)}] every non-singular point ${\bf p}\not\in {\mathcal C}$ is $1$-adjacent to precisely two points of ${\mathcal C}$
or it is not $1$-adjacent to any point of ${\mathcal C}$.
\end{enumerate}
\end{prop}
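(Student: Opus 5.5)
The forward direction comes from a parity identity for the bilinear form $F$; the converse comes from linear algebra over ${\mathbb F}_2$ using non-degeneracy of $F$. Throughout I use the identification from Section 4: two distinct non-singular points ${\bf p}=\langle{\bf x}\rangle$, ${\bf q}=\langle{\bf y}\rangle$ are $1$-adjacent if and only if $F({\bf x},{\bf y})=1$. This follows from Proposition \ref{prop-F2-2} and the second bulleted observation above. Two further facts are used: $F({\bf x},{\bf x})=0$ for all ${\bf x}$ (characteristic two), and $Q({\bf x}+{\bf y})=Q({\bf x})+Q({\bf y})+F({\bf x},{\bf y})$.

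\emph{A line satisfies (NS).} Let ${\mathcal C}=\{\langle{\bf a}\rangle,\langle{\bf b}\rangle,\langle{\bf a}+{\bf b}\rangle\}$ be a totally non-singular line. For any non-singular ${\bf p}=\langle{\bf x}\rangle\notin{\mathcal C}$ we have
$$F({\bf x},{\bf a})+F({\bf x},{\bf b})+F({\bf x},{\bf a}+{\bf b})=0$$
by linearity. Hence the number of points of ${\mathcal C}$ that are $1$-adjacent to ${\bf p}$ is even, i.e. it is $0$ or $2$, which is (NS).

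\emph{A clique satisfying (NS) is a line.} Let ${\mathcal C}=\{\langle{\bf a}\rangle,\langle{\bf b}\rangle,\langle{\bf c}\rangle\}$ be a $3$-clique of $\Gamma_1$ that is not a line, so ${\bf c}\ne{\bf a}+{\bf b}$. Over ${\mathbb F}_2$ the vectors ${\bf a},{\bf b},{\bf c}$ are then linearly independent. Since $F$ is non-degenerate, the functionals $F(\cdot,{\bf a})$, $F(\cdot,{\bf b})$, $F(\cdot,{\bf c})$ are linearly independent. Therefore the set
$$W=\{{\bf x}: F({\bf x},{\bf a})=1,\ F({\bf x},{\bf b})=F({\bf x},{\bf c})=0\}$$
is a coset ${\bf x}_0+U$ of $U=\{{\bf a},{\bf b},{\bf c}\}^{\perp}$, and $\dim U=2n-3$.

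No point of ${\mathcal C}$ lies in $W$, because $F({\bf a},{\bf a})=0$, $F({\bf b},{\bf a})=1$ and $F({\bf c},{\bf a})=1$. So any non-singular ${\bf x}\in W$ gives a point ${\bf p}=\langle{\bf x}\rangle\notin{\mathcal C}$ that is $1$-adjacent to exactly one point of ${\mathcal C}$, contradicting (NS). It therefore suffices to show that $Q$ does not vanish identically on $W$.

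Suppose $Q({\bf x}_0+{\bf u})=0$ for all ${\bf u}\in U$. Expanding gives $Q({\bf u})=F({\bf x}_0,{\bf u})$ on $U$. The left side is then linear in ${\bf u}$, and by the expansion of $Q({\bf u}+{\bf v})$ this forces $F({\bf u},{\bf v})=0$ for all ${\bf u},{\bf v}\in U$. So $U$ is totally isotropic for the non-degenerate form $F$ on a $2n$-dimensional space, giving $2n-3=\dim U\le n$, that is, $n\le 3$. This settles every $n\ge 4$.

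\emph{The case $n=3$ (main obstacle).} Here $\dim U=3$ and the argument above allows $U$ to be a maximal totally isotropic subspace. To handle this I will use more of the freedom in the choice of values. Any odd pattern of values on $({\bf a},{\bf b},{\bf c})$ contradicts (NS): $(1,0,0)$, $(0,1,0)$, $(0,0,1)$ or $(1,1,1)$. This gives four cosets of $U$ to try. Writing $Q({\bf x}+{\bf u})=Q({\bf x})+l({\bf u})+F({\bf x},{\bf u})$, where $l=Q|_U$ is linear, $Q$ vanishes on the coset ${\bf x}+U$ exactly when $Q({\bf x})=0$ and $F({\bf x},\cdot)|_U=l$.

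The plan is to show that these conditions cannot hold for all four cosets at once. Adding two of the coset representatives should force the corresponding combination of ${\bf a},{\bf b},{\bf c}$ into a position incompatible with $Q({\bf a})=Q({\bf b})=Q({\bf c})=1$. If this does not come out cleanly, the fallback is a direct check in ${\rm PG}(2,2)$: there $\Gamma_1$ has only $28$ vertices, and one verifies (NS) directly for each orbit of non-collinear triangles.
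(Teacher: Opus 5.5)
Your forward direction is correct, and so is your reduction of the converse to showing that $Q$ is not identically zero on the coset $W={\bf x}_0+U$. The proof is nevertheless incomplete. The case $n=3$ lies inside the standing hypothesis $n\ge 3$, and you leave it as a plan (``should force'') plus an enumeration in ${\rm PG}(2,2)$ that you do not carry out.

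The premise behind this detour is also false: for $n=3$ your argument does \emph{not} allow $U$ to be maximal totally isotropic. You lost the needed information when you replaced ``$U$ is totally isotropic'' by the bound $\dim U\le n$. Total isotropy means $U\subseteq U^{\perp}$, and since $F$ is non-degenerate, $U^{\perp}=\langle{\bf a},{\bf b},{\bf c}\rangle$ has dimension $3$. As $\dim U=2n-3\ge 3$, this forces $n=3$ and $U=\langle{\bf a},{\bf b},{\bf c}\rangle$. That is impossible, because $F({\bf a},{\bf b})=1$ (the points of ${\mathcal C}$ are pairwise $1$-adjacent). So $Q$ never vanishes identically on $W$, for every $n\ge 3$, and the four-coset plan is unnecessary.

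There is also a slip in showing that no point of ${\mathcal C}$ lies in $W$. The values $F({\bf b},{\bf a})=F({\bf c},{\bf a})=1$ are compatible with membership in $W$; what excludes ${\bf b}$ and ${\bf c}$ is $F({\bf b},{\bf c})=1$.

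With these repairs, your converse is a genuinely different and somewhat cleaner route than the paper's. The paper embeds the plane spanned by ${\mathcal C}$ in a solid on which $F$ is non-degenerate. It then shows, by a case analysis, that one of the poles ${\bf p}_B,{\bf p}_C$ of the other two planes through $\langle{\bf p}_2,{\bf p}_3\rangle$ is a non-singular point $1$-adjacent to ${\bf p}_1$ only. You instead exhibit a whole affine subspace of candidate witnesses and need only the non-degeneracy of $F$, uniformly in $n$. Your parity identity for the forward direction is the same as the paper's covering of the space by ${\bf p}_1^{\perp},{\bf p}_2^{\perp},{\bf p}_3^{\perp}$.
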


This shows that every totally non-singular line is a maximal clique of $\Gamma_1$.

\begin{proof}[Proof of Proposition \ref{prop2-1}]
Suppose that ${\mathcal C}=\{{\bf p}_1,{\bf p}_2,{\bf p}_3\}$ is a totally non-singular line.
Then 
${\bf p}^{\perp}_1\cap {\bf p}^{\perp}_2 \subset {\bf p}^{\perp}_3$
and the projective space ${\rm PG}(2n-1,2)$ is the union 
of the hyperplanes ${\bf p}^{\perp}_1,{\bf p}^{\perp}_2,{\bf p}^{\perp}_3$.
So, a non-singular point ${\bf p}\not\in {\mathcal C}$ is contained in precisely one of these hyperplanes
or in their intersection. This implies (NS); in particular, every totally non-singular line is a maximal clique of $\Gamma_1$.

Show that (NS) fails if ${\mathcal C}=\{{\bf p}_1,{\bf p}_2,{\bf p}_3\}$ spans a plane $A$. 
There is a solid $S$ containing $A$ and such that the restriction of $F$ to the  $4$-dimensional subspace of $V\times V^*$ corresponding to $S$  is non-degenerate. 
We denote this restriction by $F'$.
For every plane $X\subset S$ there is a unique point ${\bf p}_X\in X$ such that $S\cap {\bf p}^{\perp}_X=X$.

Since the line $\langle {\bf p}_i,{\bf p}_j\rangle$ is totally non-singular for any distinct $i,j\in \{1,2,3\}$,
all points of $A$ are non-singular except ${\bf p}_A$.
Let $B$ and $C$ be the remaining two planes of $S$ passing through the line $\langle {\bf p}_2,{\bf p}_3\rangle$. 
If one of the points ${\bf p}_B,{\bf p}_C$ is non-singular,  say ${\bf p}_B$, then ${\bf p}_B$ and ${\bf p}_1$ are $1$-adjacent 
(these points are non-orthogonal, since $F'$ is non-degenerate). 
On the other hand, ${\bf p}_B$ is not $1$-adjacent to ${\bf p}_2,{\bf p}_3$ which contradicts (NS).

Suppose now that ${\bf p}_B,{\bf p}_C$ are singular. 
Since ${\bf p}_B$ is orthogonal to ${\bf p}_2,{\bf p}_3$ and $\langle {\bf p}_2,{\bf p}_3\rangle$ is totally non-singular,
every point of $B$ distinct from ${\bf p}_B$ is non-singular. 
The same holds for the plane $C$. 
Therefore, all points of $S$ are non-singular except ${\bf p}_A,{\bf p}_B,{\bf p}_C$. 
These points are mutually non-orthogonal (since $F'$ is non-degenerate) and, consequently, 
they span a plane $D$. Then there is a plane of $S$ intersecting $D$ in a totally non-singular line. 
This plane is formed by non-singular points which is impossible (every totally non-singular line is a maximal clique of $\Gamma_1$). 
So, at least one of the points ${\bf p}_B,{\bf p}_C$ is non-singular which implies that (NS) fails. 
\end{proof}

For every $2$-element coclique $\{{\bf p},{\bf q}\}$ of $\Gamma_1$ the third point on the line $\langle {\bf p},{\bf q}\rangle$ is singular. 
Two distinct $2$-element cocliques of $\Gamma_1$ are said to be {\it parallel} if the corresponding lines are intersecting in a singular point.
Every singular point is determined by a class of parallel $2$-element cocliques of $\Gamma_1$.
So, we need to characterize such classes in terms of the relation $\sim_1$.

For any two parallel $2$-element cocliques of $\Gamma_1$
the corresponding lines are intersecting in a singular point ${\bf p}$ and span a plane contained in ${\bf p}^{\perp}$.
There are precisely two possibilities:
\begin{enumerate}
\item[(1)] all points on the plane except ${\bf p}$ are non-singular,
\item[(2)]  singular points of the plane form a line. 
\end{enumerate}
We say that these are parallel cocliques of {\it the first} or {\it the second type}
if the corresponding possibility is realized. 

Proposition \ref{prop2-1} provides a characterization of  totally non-singular lines  in terms of $\sim_1$. 
Therefore, parallel cocliques of the first type can be characterized in terms of $\sim_1$  as follows.

\begin{lemma}\label{lemma2-1}
Let ${\mathcal C}_1,{\mathcal C}_2$ be distinct $2$-element cocliques of $\Gamma_1$.
These are parallel cocliques of the first type if and only if  there are two distinct  totally non-singular lines which intersect each of these cocliques
and are intersecting outside of ${\mathcal C}_1\cup {\mathcal C}_2$.
\end{lemma}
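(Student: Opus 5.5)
We prove both implications by working inside the plane spanned by the two cocliques and computing with vectors. The tools are $Q$, the associated alternating form $F$ (recall $F({\bf x},{\bf x})=0$ in characteristic two), and the identity $Q({\bf x}+{\bf y})=Q({\bf x})+Q({\bf y})+F({\bf x},{\bf y})$. Since $|{\mathbb F}|=2$, we identify points with non-zero vectors, so a point ${\bf x}$ is non-singular exactly when $Q({\bf x})=1$. We assume throughout that ${\mathcal C}_1\cap{\mathcal C}_2=\emptyset$. This is implicit in the definition: parallel cocliques span distinct lines meeting in a singular point, so they share no anti-flag. Without this assumption the "if" direction can fail. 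For instance, take ${\mathcal C}_1=\{x,a\}$ and ${\mathcal C}_2=\{x,c\}$ with $\{a,c,a+c\}$ a totally non-singular line; then the line through $x$ and $a+c$ together with that line satisfies the condition of the lemma, although the two lines $\langle x,a\rangle$ and $\langle x,c\rangle$ meet in the non-singular point $x$.

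\emph{Only if.} Let ${\mathcal C}_1=\{a,b\}$ and ${\mathcal C}_2=\{c,d\}$ be parallel of the first type. Their lines meet in the singular point $p=a+b=c+d$ and span a plane $P$ in which every point except $p$ is non-singular. Hence every line of $P$ not through $p$ is totally non-singular. The seven points of $P$ are $p,a,b,c,d$ together with $e=a+c=b+d$ and $f=a+d=b+c$. The lines $\{a,c,e\}$ and $\{b,d,e\}$ do not pass through $p$, so they are distinct totally non-singular lines. Each meets both cocliques, and they intersect in $e\notin{\mathcal C}_1\cup{\mathcal C}_2$, as required.

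\emph{If.} Let $L_1\ne L_2$ be totally non-singular lines meeting both cocliques and intersecting in $e\notin{\mathcal C}_1\cup{\mathcal C}_2$. No $L_i$ contains both points of a coclique, because the line through a $2$-element coclique has a singular third point. Moreover $L_1\cap L_2=\{e\}$, so the remaining two points of $L_1$ and of $L_2$ are disjoint sets. Since the cocliques are disjoint, after relabelling we get $L_1=\{e,a,c\}$ and $L_2=\{e,b,d\}$ with ${\mathcal C}_1=\{a,b\}$ and ${\mathcal C}_2=\{c,d\}$. Then $a+c=e=b+d$, so $a+b=c+d=:p$. Because ${\mathcal C}_1$ is a coclique, $p$ is singular, and therefore the lines of ${\mathcal C}_1$ and ${\mathcal C}_2$ meet in a singular point, i.e.\ the cocliques are parallel.

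It remains to show the type is the first one. The plane $\langle a,b,c\rangle$ has points $p,a,b,c,d,e,f$ with $f=a+d$. All of these except possibly $f$ are already known to be non-singular, apart from $p$. Since $Q(a)=Q(c)=Q(e)=1$, we get $F(a,c)=Q(a+c)+Q(a)+Q(c)=1$. Since $p$ is singular, $F(a,b)=Q(a+b)+Q(a)+Q(b)=0$. Therefore
$$F(a,d)=F(a,a+b+c)=F(a,a)+F(a,b)+F(a,c)=0+0+1=1,$$
and so $Q(f)=Q(a)+Q(d)+F(a,d)=1$. Thus $p$ is the only singular point of the plane, and the cocliques are parallel of the first type. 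The only delicate step is the combinatorial relabelling above, which relies on the disjointness of the cocliques.
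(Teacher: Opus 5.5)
Your argument for \emph{disjoint} cocliques is correct, and it is a sound filling-in of the paper's one-line ``easy verification'': the relabelling $L_1=\{e,a,c\}$, $L_2=\{e,b,d\}$, the identity $a+b=c+d$, and the computation $F(a,d)=1$, $Q(a+d)=1$ are all right.

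The gap is the case $\mathcal{C}_1\cap\mathcal{C}_2\neq\emptyset$. The statement does not exclude it, and you set it aside on the strength of a counterexample that is false. Take $\mathcal{C}_1=\{x,a\}$ and $\mathcal{C}_2=\{x,c\}$. Then $x+a$ and $x+c$ are singular, so $F(x,a)=Q(x+a)+Q(x)+Q(a)=0$, and likewise $F(x,c)=0$. Hence $F(x,a+c)=0$ and $Q(x+a+c)=Q(x)+Q(a+c)+F(x,a+c)=1+1+0=0$. Equivalently, $x$ is orthogonal to $a$ and to $c$, hence to $a+c$. So the line through $x$ and $a+c$ contains the singular point $x+a+c$ and is not totally non-singular, and your configuration does not satisfy the hypothesis. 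Disjointness being ``implicit in the definition'' only bears on the \emph{conclusion} of the ``if'' direction. It cannot be assumed as part of its hypothesis.

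In fact the ``if'' direction holds vacuously for intersecting cocliques, and you need to prove this. A totally non-singular line meeting both $\{x,a\}$ and $\{x,c\}$ cannot contain two points of one coclique. So it either passes through $x$, or it equals $\{a,c,a+c\}$. Two distinct such lines cannot both pass through $x$, since they would then meet in $x\in\mathcal{C}_1\cup\mathcal{C}_2$. Hence one of them is $\{a,c,a+c\}$, and their common point outside $\mathcal{C}_1\cup\mathcal{C}_2$ is $e=a+c$. The other line is then the line through $x$ and $a+c$, which is not totally non-singular by the computation above. So the hypothesis can never hold in this case.

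With this paragraph added, your proof covers the statement as given. This matters: Lemmas~\ref{lemma2-1} and~\ref{lemma2-2} are used to characterize parallelism among \emph{all} pairs of distinct $2$-element cocliques. Your claim that the lemma fails for intersecting pairs is false, and it would undermine that use.
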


\begin{proof}
Easy verification. 
\end{proof}

\begin{lemma}\label{lemma2-2}
Let ${\mathcal C}_1,{\mathcal C}_2$ be distinct $2$-element cocliques of $\Gamma_1$ which are not parallel cocliques of the first type. 
These are parallel cocliques of the second type if and only if  
there is a $2$-element coclique ${\mathcal C}'$ such that ${\mathcal C}_i,{\mathcal C}'$ are parallel cocliques of the first type for every $i\in \{1,2\}$. 
\end{lemma}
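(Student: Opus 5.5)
Both implications will be proved inside the plane spanned by the lines of two parallel cocliques, using $Q$ and the form $F$.

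\emph{Sufficiency.} Let ${\mathcal C}_i=\{{\bf a}_i,{\bf b}_i\}$ and suppose ${\mathcal C}_1,{\mathcal C}'$ and ${\mathcal C}_2,{\mathcal C}'$ are parallel of the first type. Then the line of ${\mathcal C}_i$ meets the line of ${\mathcal C}'$ in a singular point. The line of ${\mathcal C}'$ contains exactly one singular point ${\bf p}$, namely the third point of that line. Hence both lines of ${\mathcal C}_1$ and ${\mathcal C}_2$ pass through ${\bf p}$, so they meet in the singular point ${\bf p}$. The lines are distinct: each line through ${\bf p}$ has exactly two non-singular points, so equal lines would force ${\mathcal C}_1={\mathcal C}_2$. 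Thus ${\mathcal C}_1,{\mathcal C}_2$ are parallel. By hypothesis they are not of the first type, so they are of the second type.

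\emph{Necessity.} Let ${\mathcal C}_1,{\mathcal C}_2$ be parallel of the second type, with lines $L_1,L_2$ through the singular point ${\bf p}$ spanning a plane $P\subset{\bf p}^\perp$. The singular points of $P$ form a line $M$ through ${\bf p}$. I look for a third line $L'$ through ${\bf p}$ whose two other points are non-singular and such that the planes $\langle L_1,L'\rangle$ and $\langle L_2,L'\rangle$ each have ${\bf p}$ as their only singular point. Then ${\mathcal C}'=L'\setminus\{{\bf p}\}$ is the required coclique.

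To find $L'$, work in the quotient ${\bf p}^\perp/{\bf p}$. This is a nondegenerate hyperbolic quadric of rank $n-1$, since $n\ge 3$. Lines through ${\bf p}$ in ${\bf p}^\perp$ become points of the quotient, with the induced form $\bar Q$. Lines with two non-singular points become non-singular points, and $L_1,L_2$ become non-singular points $\bar{\bf l}_1,\bar{\bf l}_2$. The plane $P$ becomes the line $\langle\bar{\bf l}_1,\bar{\bf l}_2\rangle$, whose third point $\bar M$ is singular because $P$ is of the second type. By Proposition~\ref{prop-F2-2} in its general form (third point singular iff the two points are orthogonal), this gives $\bar F(\bar{\bf l}_1,\bar{\bf l}_2)=0$.

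A plane $\langle L_i,L'\rangle$ has ${\bf p}$ as its only singular point exactly when the quotient line $\langle\bar{\bf l}_i,\bar{\bf l}'\rangle$ is totally non-singular. For non-singular points this means $\bar F(\bar{\bf l}_i,\bar{\bf l}')=1$. So I need a non-singular point $\bar{\bf l}'$ with
$$\bar F(\bar{\bf l}_1,\bar{\bf l}')=\bar F(\bar{\bf l}_2,\bar{\bf l}')=1.$$

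\emph{The main obstacle} is the existence of $\bar{\bf l}'$, which is a counting argument in the quotient space of dimension $2(n-1)\ge 4$. The vectors $\bar{\bf l}_1,\bar{\bf l}_2$ are linearly independent, so the conditions $\bar F(\bar{\bf l}_1,\cdot)=\bar F(\bar{\bf l}_2,\cdot)=1$ define a coset of a codimension-$2$ subspace $W$. This coset has $2^{2n-4}$ vectors, and I must show that not all of them are singular.
\begin{itemize}
\item Take one vector $y$ in the coset. Then $\bar Q(y+w)=\bar Q(y)+\bar Q(w)+\bar F(y,w)$ for $w\in W$.
\item If every vector of the coset were singular, the map $w\mapsto \bar Q(w)+\bar F(y,w)$ would be constant on $W$. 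Its polarization is $\bar F(w,w')$, so $\bar F$ would vanish on $W$.
\item Then $W$ would be totally isotropic. But $\dim W=2n-4$, while a nondegenerate form in dimension $2n-2$ has totally isotropic subspaces of dimension at most $n-1$. Since $2n-4>n-1$ for $n\ge 4$, this is a contradiction.
\end{itemize}
The case $n=3$ needs a separate direct check in the $4$-dimensional quotient, which is a hyperbolic quadric of rank $2$. Finally, $L'\ne L_1,L_2$ automatically, because $\bar F(\bar{\bf l}_i,\bar{\bf l}_i)=0$ in characteristic $2$ while $\bar F(\bar{\bf l}_i,\bar{\bf l}')=1$.
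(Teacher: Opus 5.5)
Your sufficiency argument matches the paper's, with the distinctness of the two lines made explicit. For $n\ge 4$ your necessity argument is also correct, and it takes a different route. The condition $\bar F(\bar{\bf l}_1,\bar{\bf l}')=\bar F(\bar{\bf l}_2,\bar{\bf l}')=1$ on a non-singular $\bar{\bf l}'$ is exactly the paper's requirement of a non-singular ${\bf q}\in{\bf p}^{\perp}\setminus({\bf p}_1^{\perp}\cup{\bf p}_2^{\perp})$. The paper passes to a non-degenerate $6$-dimensional subspace of Witt index $3$, i.e.\ reduces to $n=3$, and then builds ${\bf q}={\bf q}_1+{\bf q}_2$ from explicit planes $S_1,S_2$. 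You instead obtain ${\bf q}$ directly from the polarization identity and the bound $n-1$ on totally isotropic subspaces of the $(2n-2)$-dimensional quotient.

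The gap is the case $n=3$, which you defer to \emph{a separate direct check}. That check cannot succeed, because for $n=3$ the lemma is false, and your inequality $2n-4>n-1$ is exactly what breaks. For $n=3$ the space $W$ contains $\bar{\bf l}_1,\bar{\bf l}_2$ and has dimension $2$, so $W=\langle\bar{\bf l}_1,\bar{\bf l}_2\rangle$. For $y$ in your coset, $\bar Q(y+\bar{\bf l}_i)=\bar Q(y)+1+1$ and $\bar Q(y+\bar{\bf l}_1+\bar{\bf l}_2)=\bar Q(y)+0+0$, so $\bar Q$ is constant on the coset. The $4$-dimensional hyperbolic quotient has only six non-singular vectors: two in $W$, and two in each coset on which exactly one of $\bar F(\bar{\bf l}_1,\cdot)$, $\bar F(\bar{\bf l}_2,\cdot)$ equals $1$. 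Hence that constant is $0$.

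Concretely, let $e_1,e_2,e_3$ be the standard basis of $V$ and $e^*_1,e^*_2,e^*_3$ the dual basis, with coordinates $x_j,x^*_j$. Put ${\bf p}=\langle(e_1,0)\rangle$, ${\mathcal C}_1=\{\langle(e_2,e^*_2)\rangle,\langle(e_1+e_2,e^*_2)\rangle\}$ and ${\mathcal C}_2=\{\langle(e_3,e^*_3)\rangle,\langle(e_1+e_3,e^*_3)\rangle\}$. These are parallel of the second type, since $\langle(e_2+e_3,e^*_2+e^*_3)\rangle$ is singular. A coclique parallel to ${\mathcal C}_1$ has the form $\{\langle(x,x^*)\rangle,\langle(x+e_1,x^*)\rangle\}$ with $x^*_1=0$ and $x^*x=1$. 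Being of the first type with both ${\mathcal C}_1$ and ${\mathcal C}_2$ requires $\langle(x+e_2,x^*+e^*_2)\rangle$ and $\langle(x+e_3,x^*+e^*_3)\rangle$ to be non-singular, i.e.\ $x_2+x^*_2=x_3+x^*_3=1$. Then $x^*x=x_2x^*_2+x_3x^*_3=0$, a contradiction.

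So your argument proves the lemma precisely for $n\ge 4$, which is the correct hypothesis. The paper's $n=3$ argument fails at the same point. Its plane $S_1$ actually contains $\ell_2$; in the example, $(e_2+e_3,e^*_2)+(e_2,e^*_2+e^*_3)=(e_3,e^*_3)$. So $S\cap S_1=\ell_2$ is not the singular line of $S$, and the point ${\bf q}_2$ non-orthogonal to ${\bf q}_1$ that the paper asserts does not exist. The paper's reduction to $n=3$ therefore does not establish the lemma for any $n$, whereas your argument does for all $n\ge 4$.
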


\begin{proof}
Let $\ell_1$ and $\ell_2$ be the lines containing ${\mathcal C}_1$ and ${\mathcal C}_2$, respectively. 
If there is ${\mathcal C}'$ such that ${\mathcal C}_i,{\mathcal C}'$ are parallel cocliques of the first type for every $i\in \{1,2\}$,
then $\ell_1$ and $\ell_2$ are intersecting in a singular point which means that ${\mathcal C}_1,{\mathcal C}_2$ are parallel. 

Suppose now that ${\mathcal C}_1,{\mathcal C}_2$ are parallel cocliques of the second type
and the lines $\ell_1,\ell_2$ are intersecting in a singular point ${\bf p}$.
We take non-singular points ${\bf p}_1\in \ell_1$ and ${\bf p}_2\in \ell_2$. 
These points are orthogonal (since ${\mathcal C}_1,{\mathcal C}_2$ are parallel cocliques of the second type). 
The line $\langle {\bf p}_1,{\bf p}_2\rangle$ does not pass through ${\bf p}$ which guarantees that 
the intersections of ${\bf p}^{\perp}$ with ${\bf p}^{\perp}_1$ and ${\bf p}^{\perp}_2$ are distinct. 
It is sufficient to show that there is a non-singular point 
$${\bf q}\in {\bf p}^{\perp}\setminus ({\bf p}^{\perp}_1\cup {\bf p}^{\perp}_2).$$ 
Indeed, if ${\bf q}'\in \langle {\bf p}, {\bf q}\rangle$ is distinct from ${\bf p}$ and ${\bf q}$,
then ${\bf q}'$ is non-singular and the coclique ${\mathcal C}'=\{{\bf q}, {\bf q}'\}$ is as required.

If $n>3$, then there is a $6$-dimensional subspace of $V\times V^*$ containing $\ell_1\cup \ell_2$ 
and such the restriction of $F$ to this subspace is non-degenerate of Witt index equal to 3. 
For this reason we can assume with no loss that $n=3$.
Then ${\bf p}^{\perp}\cap {\bf p}^{\perp}_1\cap {\bf p}^{\perp}_2$ is the plane spanned by $\ell_1\cup \ell_2$;
we denote this plane by $S$.
For every $i\in \{1,2\}$ there are precisely four non-singular points  in $({\bf p}^{\perp}\cap {\bf p}^{\perp}_i)\setminus {\bf p}^{\perp}_{3-i}$
(the line joining ${\bf p}_i$ with any singular point from $({\bf p}^{\perp}\cap {\bf p}^{\perp}_i)\setminus {\bf p}^{\perp}_{3-i}$ contains a non-singular point).
These four non-singular points are lying on two lines passing through ${\bf p}$. 
Let $S_i$ be the plane spanned by these lines.
Then $S\cap S_i$ is the singular line of $S$ passing through ${\bf p}$. 
Therefore, ${\bf p}^{\perp}\cap {\bf p}^{\perp}_i$ is spanned by $S_i$ and $\ell_{3-i}$. 
We take any non-singular point 
$${\bf q}_1\in ({\bf p}^{\perp}\cap {\bf p}^{\perp}_1)\setminus {\bf p}^{\perp}_2.$$
Then $\ell_1\subset {\bf q}^{\perp}_1$ and, consequently, ${\bf q}^{\perp}_1$ does not contain $S_2$ 
(otherwise ${\bf p}^{\perp}\cap{\bf q}^{\perp}_1$ coincides with ${\bf p}^{\perp}\cap {\bf p}^{\perp}_2$ which is impossible, since ${\bf p}\not\in \langle {\bf q}_1,{\bf p}_2 \rangle$). 
So, ${\bf q}^{\perp}_1$ intersects $S_2$ in a line 
which implies the existence of a non-singular point 
$${\bf q}_2 \in ({\bf p}^{\perp}\cap {\bf p}^{\perp}_2)\setminus {\bf p}^{\perp}_1$$
non-orthogonal to ${\bf q}_1$. 
The non-singular point ${\bf q}\in \langle {\bf q}_1,{\bf q}_2\rangle$ distinct from ${\bf q}_1,{\bf q}_2$ is as required.
\end{proof}

Combining Lemmas \ref{lemma2-1}  and   \ref{lemma2-2}  
we obtain a characterization of parallel classes of $2$-element cocliques of $\Gamma_1$ in terms of the relation $\sim_1$. 
So, the set of points of the polar space  ${\mathcal O}^{+}(2n, 2)$ can be recovered from the graph $\Gamma_1$.
Singular lines are recovered from $\Gamma_1$ as follows.

\begin{prop}
Let ${\bf p}_1,{\bf p}_2, {\bf p}_3$ be mutually distinct singular points 
and let ${\mathfrak C}_i$, $i\in \{1,2,3\}$ be the parallel class of $2$-element  cocliques corresponding to ${\bf p}_i$.
These points form a line if and only if there are 
cocliques ${\mathcal C}_1\in {\mathfrak C}_1, {\mathcal C}_2\in {\mathfrak C}_2, {\mathcal C}_3\in {\mathfrak C}_3$
such that $C_i\cap C_j$ is a point for every choice of distinct indices $i, j\in \{1,2,3\}$ and the points $C_1\cap C_2$, $C_2\cap C_3$ and $C_3\cap C_1$ are mutually distinct. 
\end{prop}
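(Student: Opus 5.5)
The plan is to translate the combinatorial condition into projective geometry inside ${\rm PG}(2n-1,2)$. A coclique ${\mathcal C}_i\in{\mathfrak C}_i$ is a pair of non-singular points $\{{\bf a},{\bf b}\}$ whose line $\ell_i$ has third point ${\bf p}_i$, so ${\mathcal C}_i=\ell_i\setminus\{{\bf p}_i\}$. The condition therefore says the following. There are three non-singular points ${\bf x}={\mathcal C}_1\cap{\mathcal C}_2$, ${\bf y}={\mathcal C}_2\cap{\mathcal C}_3$, ${\bf z}={\mathcal C}_3\cap{\mathcal C}_1$, mutually distinct. Moreover, ${\mathcal C}_1=\{{\bf x},{\bf z}\}$, ${\mathcal C}_2=\{{\bf x},{\bf y}\}$, ${\mathcal C}_3=\{{\bf y},{\bf z}\}$, so that ${\bf p}_1={\bf x}+{\bf z}$, ${\bf p}_2={\bf x}+{\bf y}$, ${\bf p}_3={\bf y}+{\bf z}$ (vectors over ${\mathbb F}_2$). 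The whole proof rests on this reformulation and on the identity ${\bf p}_1+{\bf p}_2+{\bf p}_3=0$.

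\emph{Sufficiency.} If such cocliques exist, then ${\bf p}_1+{\bf p}_2={\bf z}+{\bf y}={\bf p}_3$. Since the ${\bf p}_i$ are distinct, they are the three points of a line of ${\rm PG}(2n-1,2)$. This line consists of singular points, hence it is a singular line, i.e.\ a line of ${\mathcal O}^{+}(2n,2)$. (Singularity of the line also follows from $Q$ vanishing on three points of an ${\mathbb F}_2$-line.)

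\emph{Necessity.} Let ${\bf p}_1,{\bf p}_2,{\bf p}_3$ be the points of a singular line $m$, so ${\bf p}_3={\bf p}_1+{\bf p}_2$. The task is to find a non-singular point ${\bf x}$ such that ${\bf y}={\bf x}+{\bf p}_2$ and ${\bf z}={\bf x}+{\bf p}_1$ are also non-singular. Then ${\bf y}+{\bf z}={\bf p}_3$, and taking ${\mathcal C}_1=\{{\bf x},{\bf z}\}$, ${\mathcal C}_2=\{{\bf x},{\bf y}\}$, ${\mathcal C}_3=\{{\bf y},{\bf z}\}$ gives the required cocliques. The three points ${\bf x},{\bf y},{\bf z}$ are automatically distinct because the ${\bf p}_i$ are nonzero and distinct, and each ${\mathcal C}_i$ is a $2$-element coclique since the third point of its line is the singular ${\bf p}_i$.

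For singular ${\bf p}$ and non-singular ${\bf x}$ we have $Q({\bf x}+{\bf p})=Q({\bf x})+F({\bf x},{\bf p})=1+F({\bf x},{\bf p})$. So ${\bf y},{\bf z}$ are non-singular exactly when ${\bf x}\perp{\bf p}_1$ and ${\bf x}\perp{\bf p}_2$, i.e.\ ${\bf x}\in m^{\perp}$. It therefore remains to find a non-singular point in $m^{\perp}$, a subspace of codimension $2$ (dimension $2n-2$) containing $m$. I expect this existence claim to be the main obstacle, though it is mild. The restriction of $Q$ to $m^{\perp}$ has radical $m$, and $m^{\perp}/m$ carries a non-degenerate hyperbolic form of dimension $2n-4\ge 2$ (as $n\ge 3$). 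That form has a non-singular vector, for instance $e+f$ for a hyperbolic pair $e,f$. Lifting it to $m^{\perp}$ gives a non-singular ${\bf x}$, because $Q$ is constant on cosets of the singular totally isotropic $m$. Concretely, one could instead apply Witt's theorem to take $m=\langle (e_1,0),(e_2,0)\rangle$ and ${\bf x}=(e_3,e_3^*)$.
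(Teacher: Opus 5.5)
Your proof is correct and follows essentially the same route as the paper. For necessity, both you and the paper pick a non-singular point in $m^{\perp}$ and read off the three cocliques in the plane it spans with $m$. For sufficiency, both show that ${\bf p}_3$ is the third point of $\langle {\bf p}_1,{\bf p}_2\rangle$: the paper argues this synthetically inside the plane spanned by $\ell_1,\ell_2,\ell_3$, while you compute ${\bf p}_1+{\bf p}_2={\bf p}_3$ directly. Your argument via $m^{\perp}/m$ supplies the justification for the existence of a non-singular point in $m^{\perp}$, which the paper only asserts, and you also make explicit why the resulting line is singular.
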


\begin{proof}
If $\ell=\{{\bf p}_1,{\bf p}_2, {\bf p}_3\}$ is a line,
then $\ell^{\perp}$ is a $(2n-3)$-dimensional subspace of ${\rm PG}(2n-1,2)$ 
and, consequently, it contains a non-singular point ${\bf p}$ (since $n\ge 3$).
There are precisely four non-singular points on the plane spanned by $\ell$ and ${\bf p}$.
This plane contains three cocliques satisfying the required condition.

Conversely, suppose that ${\mathcal C}_1\in {\mathfrak C}_1, {\mathcal C}_2\in {\mathfrak C}_2, {\mathcal C}_3\in {\mathfrak C}_3$
are mutually intersecting in three distinct points. 
Then $\ell_i={\mathcal C}_i\cup\{{\bf p}_i\}$ is a line for every $i\in \{1,2,3\}$
and these lines span a plane. 
Therefore,  the third point on the line $\langle {\bf p}_1,{\bf p}_2\rangle$ belongs to $\ell_3\setminus(\ell_1\cup \ell_2)$
and, consequently, this point is ${\bf p}_3$.
\end{proof}


\begin{thebibliography}{9}
%\bibitem{vBok} M. M. van Bokhoven, {\it Automorphism groups of tangent graphs to polar spaces}, bachelor thesis, TU$/$e, 2024. 
\bibitem{BvM}  A. E. Brouwer, H. Van Maldeghem,
{\it Strongly regular graphs},
Encyclopedia of Mathematics and its Applications 182, 
Cambridge University Press 2022.

\bibitem{CCG} A. M. Cohen, H. Cuypers, R. Gramlich, {\it Local recognition of non-incident point-hyperplane graphs}, Combinatorica 25 (2005), 271-296. 

\bibitem{Die}
J. Dieudonn\'e,  {\it La G\'eom\'etrie des Groupes Classiques},  3d edn., Springer 1971.

\bibitem{Mackey}
G. W.  Mackey, {\it Isomorphisms of normed linear spaces}, Ann. of Math. 43(1942), 244-260.

%\bibitem{HP} H. Havlicek, M. Pankov, 
%{\it Transformations on the product of Grassmann spaces}, Demonstr. Math. 38(2005), 675-688. 

\bibitem{Pank-book}
M. Pankov, {\it Grassmannians of classical buildings}, World Scientific 2010.

\bibitem{IP} F. Ihringer, A. Pasini, {\it Bijection between point-hyperplane anti-flags of $V(n, 2)$ and non-singular points of  $O^+(2n, 2)$},
arXiv: 2509.14798 (2025).

\end{thebibliography}
\end{document}